
\documentclass[11pt]{amsart}
\usepackage{amsmath,amsthm,amsfonts,amssymb,mathrsfs, mathtools}
\usepackage{enumerate}
\usepackage[colorlinks=true, linkcolor=blue, citecolor=magenta, menucolor=black]{hyperref}
\usepackage[demo]{graphicx}
\usepackage[up]{caption}
\usepackage{subcaption}
\usepackage{pgf,tikz}
\usepackage[toc,page]{appendix}
\usetikzlibrary{arrows}
\usetikzlibrary{patterns}
\usepackage{color}
\usepackage{enumitem}

\theoremstyle{plain}
\newtheorem{theorem}{Theorem}[section]

\newtheorem{proposition}[theorem]{Proposition}

\newtheorem{letterthm}{Theorem}

\newtheorem{lettercor}[letterthm]{Corollary}

\theoremstyle{definition}
\newtheorem{definition}[theorem]{Definition}
\newtheorem{example}[theorem]{Example}
\newtheorem{remark}[theorem]{Remark}
\newtheorem{examples}[theorem]{Examples}
\newtheorem*{terminology}{Terminology}
\newtheorem*{Remark}{Remark}
\newtheorem*{Examples}{Examples}
\newtheorem*{connes}{Connes'\! rigidity conjecture}
\newtheorem{problem}[theorem]{Problem}
\newtheorem*{conjecture}{Conjecture}

\numberwithin{equation}{section}

\newcommand{\bfC}{\mathbf{C}}

\newcommand{\bfN}{\mathbf{N}}

\newcommand{\bfQ}{\mathbf{Q}}
\newcommand{\bfR}{\mathbf{R}}

\newcommand{\bfT}{\mathbf{T}}
\newcommand{\bfZ}{\mathbf{Z}}

\newcommand{\Stab}{\operatorname{Stab}}
\newcommand{\Fix}{\operatorname{Fix}}

\newcommand{\ovt}{\mathbin{\overline{\otimes}}}

\newcommand{\Ad}{\operatorname{Ad}}

\newcommand{\EL}{\operatorname{EL}}

\newcommand{\SL}{\operatorname{SL}}
\newcommand{\SO}{\operatorname{SO}}
\newcommand{\Ind}{\operatorname{Ind}}

\newcommand{\Prob}{\operatorname{Prob}}
\newcommand{\supp}{\operatorname{supp}}

\newcommand{\rk}{\operatorname{rk}}
\newcommand{\bary}{\operatorname{Bar}}

\newcommand{\Sp}{\operatorname{Sp}}

\newcommand{\Out}{\operatorname{Out}}

\newcommand{\dpr}{^{\prime\prime}}


\newcommand{\rE}{\operatorname{ E}}
\newcommand{\rC}{\operatorname{C}}
\newcommand{\rW}{\operatorname{W}}
\newcommand{\rL}{\operatorname{ L}}

\newcommand{\Ball}{\operatorname{Ball}}
\newcommand{\Har}{\operatorname{Har}}

\newcommand{\PSL}{\operatorname{PSL}}

\newcommand{\Char}{\operatorname{Char}}

\allowdisplaybreaks

\begin{document}



\title[Noncommutative ergodic theory of higher rank lattices]{Noncommutative ergodic theory \\ of higher rank lattices}


\author{Cyril Houdayer}
\address{Universit\'e Paris-Saclay \\ Institut Universitaire de France \\  Laboratoire de Math\'ematiques d'Orsay\\ CNRS \\ 91405 Orsay\\ FRANCE}
\email{cyril.houdayer@universite-paris-saclay.fr}
\thanks{The author is supported by Institut Universitaire de France}




\begin{abstract}
We survey recent results regarding the study of dynamical properties of the space of positive definite functions and characters of higher rank lattices. These results have several applications to ergodic theory, topological dynamics, unitary representation theory and operator algebras. The key novelty in our work is a dynamical dichotomy theorem for equivariant faithful normal unital completely positive maps between noncommutative von Neumann algebras and the space of bounded measurable functions defined on the Poisson boundary of semisimple Lie groups.
\end{abstract}

\maketitle


\section{Introduction and main results}\label{section:introduction}

In order to explain the motivation for our work and to state our main results, we set up the following terminology regarding \emph{higher rank lattices}.

\begin{terminology}
Let  $G$ be any connected semisimple real Lie group with finite center, no nontrivial compact factor and real rank $\rk_{\bfR}(G) \geq 2$. Let $\Gamma < G$ be any \emph{irreducible lattice}, meaning that $\Gamma$ is a discrete subgroup of $G$ with finite covolume such that $N \cdot \Gamma$ is a dense subgroup of $G$ for every noncentral closed normal subgroup $N \lhd G$. In what follows, if all the above conditions are satisfied, then we simply say that $\Gamma < G$ is a \emph{higher rank lattice}.
\end{terminology}

The following examples of higher rank lattices are particular cases of general results due to Borel--Harish-Chandra \cite{BHC61}.

\begin{Examples}
For every $d \geq 2$, the special linear group $\SL_d(\bfR)$ is a  connected simple real Lie group with finite center $\mathscr Z(\SL_d(\bfR)) = \{\pm 1_d\}$ and real rank $\rk_{\bfR}(\SL_d(\bfR))  = d - 1$.
\begin{enumerate}
\item For every $d \geq 3$, $\SL_d(\bfZ) < \SL_d(\bfR)$ is a higher rank lattice.
\item For every $d \geq 2$ and every square free integer $q \in \bfN$, 
\begin{equation*}
\Gamma \coloneqq \{ (g, g^\sigma) \mid g \in \SL_d(\bfZ[\sqrt{q}]) \} < \SL_d(\bfR) \times \SL_d(\bfR) \coloneqq G
\end{equation*}
is a higher rank lattice, where $\sigma$ is the order $2$ automorphism of $\bfQ(\sqrt{q})$.
\end{enumerate}
\end{Examples}

The main inspiration for our work is Margulis'\! celebrated \emph{normal subgroup theorem} which states that for any higher rank lattice $\Gamma < G$, any normal subgroup $N \lhd \Gamma$ is either finite and contained in $\mathscr Z(\Gamma)$ or $N$ has finite index in $\Gamma$ (see \cite[Theorem IV.4.9]{Ma91}). Margulis'\! remarkable strategy to prove the normal subgroup theorem consists of two ``halves'': the \emph{amenability half} and the \emph{property} (T) \emph{half}. Indeed, assuming that $N \lhd \Gamma$ is a noncentral normal subgroup, to prove that the quotient group $\Gamma/ N$ is finite, Margulis showed that $\Gamma/N$ is both amenable and has property (T). The proof of the amenability half relies on Margulis'\! \emph{factor theorem} which states that any measurable $\Gamma$-factor of the homogeneous space $G/P$, where $P < G$ is a minimal parabolic subgroup, is measurably isomorphic to a $G$-factor whence of the form $G/Q$, where $P < Q < G$ is an intermediate parabolic subgroup (see \cite[Theorem IV.2.11]{Ma91}). Margulis'\! strategy has been used to prove a normal subgroup theorem for various classes of irreducible lattices in product groups (see \cite{BM00, Sh99, BS04, CS13}) and to understand the structure of point stabilizers of ergodic probability measure preserving actions of higher rank lattices (see \cite{SZ92, CP12}). More recently, Margulis'\! strategy has been adapted to the noncommutative setting to study \emph{characters} of higher rank lattices (see \cite{CP13, Pe14}).

In that respect, for any countable discrete group $\Lambda$, we denote by $\mathscr P(\Lambda)$ the space of positive definite functions $\varphi : \Lambda \to \bfC$ normalized so that $\varphi(e) = 1$. Then $\mathscr P(\Lambda) \subset \ell^\infty(\Lambda)$ is a weak-$\ast$ compact convex subset. Thanks to the Gelfand--Naimark--Segal (GNS) construction, to any positive definite function $\varphi \in \mathscr P(\Lambda)$ corresponds a triple $(\pi_\varphi, \mathscr H_\varphi, \xi_\varphi)$,  where $\pi_\varphi : \Lambda \to \mathscr U(\mathscr H_\varphi)$ is a unitary representation and $\xi_\varphi \in \mathscr H_\varphi$ is a unit vector such that the linear span of $\pi_\varphi(\Lambda)\xi_\varphi$ is dense in $\mathscr H_\varphi$ and 
\begin{equation*}
\forall \gamma \in \Lambda, \quad \varphi (\gamma) = \langle \pi_\varphi(\gamma)\xi_\varphi, \xi_\varphi\rangle.
\end{equation*}
We consider the conjugation action $\Lambda \curvearrowright \mathscr P(\Lambda)$ defined by 
\begin{equation*}
\forall \gamma, g \in \Lambda, \forall \varphi \in \mathscr P(\Lambda), \quad (\gamma \varphi)(g) \coloneqq \varphi(\gamma^{-1} g \gamma).
\end{equation*}
A fixed point $\varphi \in \mathscr P(\Lambda)$ for the conjugation action is called a \emph{character}. We denote by $\Char(\Lambda) \subset \mathscr P(\Lambda)$ the weak-$\ast$ compact convex subset of all characters. Any countable discrete group $\Lambda$ always admits at least two characters: the \emph{trivial} character $1_\Lambda$ and the \emph{regular} character $\delta_e$. The GNS representation of the regular character $\delta_e$ coincides with the left regular representation $\lambda : \Lambda \to \mathscr U(\ell^2(\Lambda))$. An important source of characters comes from ergodic theory. Indeed, for any probability measure preserving action $\Lambda \curvearrowright (X, \nu)$ on a standard probability space, the function $\varphi : \Lambda \to \bfC : \gamma \mapsto \nu(\Fix(\gamma))$ defines a character. The action $\Lambda \curvearrowright (X, \nu)$ is (essentially) free if and only if the above character $\varphi$ is equal to $\delta_e$.

For any unitary representation $\pi : \Lambda \to \mathscr U(\mathscr H_\pi)$, we consider the unital $\rC^*$-algebra 
$$\rC^*_\pi(\Lambda) \coloneqq \rC^*(\{\pi(\gamma) \mid \gamma \in \Lambda \}) \subset \mathrm B(\mathscr H_\pi)$$ endowed with the conjugation action $\Ad(\pi) : \Lambda \curvearrowright \rC^*_\pi(\Lambda)$. We then regard the state space $\mathfrak S(\rC^*_\pi(\Lambda))$ as a $\Lambda$-invariant weak-$\ast$ compact convex subset of $\mathscr P(\Lambda)$ via the mapping $\mathfrak S(\rC^*_\pi(\Lambda)) \hookrightarrow \mathscr P(\Lambda) : \psi \mapsto \psi \circ \pi$. When $\pi = \lambda$ is the left regular representation, $\rC^*_\lambda(\Lambda)$ is the \emph{reduced} group $\rC^*$-algebra which is endowed with the canonical faithful trace $\tau_\Lambda$ defined by $\tau_\Lambda : \rC^*_\lambda(\Lambda) \to \bfC : a \mapsto \langle a \delta_e, \delta_e\rangle$. 

Given unitary representations $\pi_i : \Lambda \to \mathscr U(\mathscr H_i)$, $i = 1, 2$, we say that $\pi_2$ is \emph{weakly contained} in $\pi_1$ if the map $\pi_1(\Lambda) \to \pi_2(\Lambda) : \pi_1(\gamma) \mapsto \pi_2(\gamma)$ is well defined and extends to a $\ast$-homomorphism $\rC^*_{\pi_1}(\Lambda) \to \rC^*_{\pi_2}(\Lambda)$. Following \cite{Be89}, we say that a unitary representation $\pi : \Lambda \to \mathscr U(\mathscr H_\pi)$ is \emph{amenable} if the trivial representation $1_\Lambda$ is weakly contained in $\pi \otimes \overline \pi$. If $\pi$ contains a finite dimensional subrepresentation, then $\pi$ is amenable. If $\Lambda$ has property (T), then conversely any amenable representation $\pi : \Lambda \to \mathscr U(\mathscr H_\pi)$ contains a finite dimensional subrepresentation.

We now present in a unified way the main results we obtained in \cite{BH19} (joint work with R.\ Boutonnet) and \cite{BBHP20} (joint work with U.\ Bader, R.\ Boutonnet and J.\ Peterson). Our first main result deals with the \emph{existence of characters}. It is a fixed point theorem for the affine action of higher rank lattices on their space of positive definite functions.

\begin{letterthm}[\cite{BH19, BBHP20}]\label{thm:main}
Let $\Gamma < G$ be any higher rank lattice. Then any nonempty $\Gamma$-invariant weak-$\ast$ compact convex subset $\mathscr C \subset \mathscr P(\Gamma)$ contains a character.
\end{letterthm}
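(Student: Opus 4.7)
The plan is to adapt the ``amenability half'' of Margulis's strategy to the noncommutative affine setting of positive definite functions, using the dynamical dichotomy theorem advertised in the abstract as the key new input.

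First, by a standard Zorn's lemma argument on nonempty $\Gamma$-invariant weak-$\ast$ compact convex subsets of $\mathscr{C}$ ordered by reverse inclusion, one may assume $\mathscr{C}$ is minimal, so that it suffices to exhibit a single $\Gamma$-fixed point in $\mathscr{C}$. Let $P < G$ be a minimal parabolic subgroup and let $(B, \nu)$ be the Furstenberg--Poisson boundary $G/P$ equipped with a $\mu$-stationary probability measure for some admissible probability measure $\mu$ on $G$. Since the $\Gamma$-action on $(B, \nu)$ is amenable in the sense of Zimmer, the affine $\Gamma$-action on the weak-$\ast$ compact convex set $\mathscr{C}$ admits a $\Gamma$-equivariant measurable map $\Phi \colon (B, \nu) \to \mathscr{C}$.

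The next step is to repackage $\Phi$ as a noncommutative object to which the dichotomy applies. The $\nu$-barycenter $\psi_0 = \int_B \Phi(b) \dd \nu(b)$ lies in $\mathscr{C}$, and its GNS triple $(\pi, \mathscr{H}, \xi)$ simultaneously realizes every fiber $\Phi(b)$ as a vector state on the $\Gamma$-von Neumann algebra $M \coloneqq \pi(\Gamma)'' \subset \mathrm{B}(\mathscr{H})$. Disintegration along $\Phi$ then yields a $\Gamma$-equivariant, faithful, normal, unital completely positive map $\Theta \colon \mathrm{L}^\infty(B, \nu) \to M$. Because $B$ is a $G$-space while $M$ carries only a $\Gamma$-action, one enlarges $M$ via induction and extends $\Theta$ into a $G$-equivariant map between $G$-von Neumann algebras; this is exactly the setup to which the dynamical dichotomy theorem of \cite{BH19, BBHP20} applies.

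Invoking the dichotomy, faithfulness of $\Theta$ (forced by the minimality of $\mathscr{C}$, since any nontrivial kernel would cut out a proper $\Gamma$-invariant convex subset) rules out its ``degenerate'' alternative, leaving the alternative that the image of $\Theta$ factors through the $G$-equivariant subalgebra of $\mathrm{L}^\infty(B, \nu)$ consisting of functions measurable with respect to some intermediate parabolic quotient $G/Q$ with $P \leq Q \leq G$. Using higher real rank of $G$ together with irreducibility of $\Gamma$ (so that $\Gamma$ has no nontrivial measurable invariants on any proper $G/Q$), one iterates the dichotomy to upgrade $Q$ all the way to $G$; this forces $\Phi$ to be essentially constant, and its $\nu$-a.e.\ value is a $\Gamma$-fixed element of $\mathscr{C}$, i.e., a character. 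The main obstacle is the dynamical dichotomy itself --- the noncommutative substitute for Margulis's factor theorem --- whose proof requires a delicate analysis of the GNS vectors $\pi(\gamma)\xi$ along the boundary $B$ and of ucp maps between $\mathrm{L}^\infty(B, \nu)$ and $\Gamma$-von Neumann algebras. A secondary, more routine difficulty is the measurable disintegration used to set up $\Theta$, together with the precise identification of $M$ and of its $G$-equivariant enlargement needed for the induction step.
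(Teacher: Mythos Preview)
Your proposal has two genuine gaps. First, the map $\Theta$ points the wrong way. A $\Gamma$-equivariant boundary map $\Phi \colon B \to \mathscr C \subset \mathfrak S(A)$ dualizes to a ucp map $A \to \rL^\infty(B)$ via $a \mapsto (b \mapsto \Phi(b)(a))$, and after passing to the GNS von Neumann algebra this becomes a $\Gamma$-equivariant faithful normal ucp map $M \to \rL^\infty(G/P)$ --- exactly the paper's notion of \emph{boundary structure}. There is no natural construction of a map $\rL^\infty(B) \to M$ from your data; ``disintegration along $\Phi$'' does not produce one.

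Second, and more importantly, the dichotomy you invoke is not the one in the paper, and your endgame matches neither of its alternatives. The actual dichotomy (Theorem~\ref{thm:dynamical}) says a boundary structure $\Phi \colon M \to \rL^\infty(G/P)$ is either \emph{invariant} ($\Phi(M)=\bfC 1$) or \emph{singular} (the fiber states $\beta_b$ and $\beta_{\gamma b}$ are mutually singular for every noncentral $\gamma$). Neither branch is ``degenerate'' or ruled out by faithfulness; rather, \emph{both} branches deliver a character. The invariant branch makes $\varphi = \nu_P \circ \Phi$ a $\Gamma$-invariant state directly. In the singular branch the step you are missing entirely is the vanishing lemma (Proposition~\ref{prop:dynamical-singular}): singularity forces $\Phi(\pi(\gamma))=0$ for every $\gamma \notin \mathscr Z(\Gamma)$, so $\varphi$ is supported on $\mathscr Z(\Gamma)$ and is already a character. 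Your plan to ``iterate and upgrade $Q$ to $G$'' has no counterpart here; the Nevo--Zimmer-type statement about a proper $G/Q$ (Theorem~\ref{thm:NCNZ}) is only available when $G$ is simple, and even then it is used to \emph{verify} singularity, not to bypass it.

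As a side remark, the paper's route is shorter than the one you sketch: rather than Zorn's lemma and Zimmer amenability, one fixes a Furstenberg measure $\mu_\Gamma$ and proves that \emph{every} $\mu_\Gamma$-stationary state on $\rC^*(\Gamma)$ is $\Gamma$-invariant (via the dichotomy above, after reducing to extremal stationary states by Krein--Milman). Since $\mathscr C$ contains a $\mu_\Gamma$-stationary point by Markov--Kakutani, the theorem follows.
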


Our second main result deals with the \emph{classification of characters} of higher rank lattices. Bekka \cite{Be06} obtained the first character rigidity results in the case $\Gamma = \SL_d(\bfZ)$ for $d \geq 3$.  More recently, using a different approach based on Margulis'\! strategy discussed above, Peterson \cite{Pe14} obtained character rigidity results for arbitrary higher rank lattices (see also \cite{CP13} for the case of irreducible lattices in certain product groups).  The operator algebraic framework we developed in \cite{BH19, BBHP20} enables us to obtain a new and more conceptual proof of Peterson's character rigidity results \cite{Pe14}.

\begin{letterthm}[Peterson, \cite{Pe14}]\label{thm:characters}
Let $\Gamma < G$ be any higher rank lattice. Then any character $\varphi \in \Char(\Gamma)$ is either supported on $\mathscr Z(\Gamma)$ or its GNS representation $\pi_\varphi$ is amenable.

In case $G$ has a simple factor with property \emph{(T)}, any character $\varphi \in \Char(\Gamma)$ is either supported on $\mathscr Z(\Gamma)$ or its GNS representation $\pi_\varphi$ contains a finite dimensional subrepresentation.
\end{letterthm}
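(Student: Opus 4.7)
The plan is to translate the character $\varphi$ into operator-algebraic data and then invoke the noncommutative dynamical dichotomy highlighted in the abstract. Given $\varphi \in \Char(\Gamma)$ with GNS triple $(\pi_\varphi, \mathscr{H}_\varphi, \xi_\varphi)$, I form the finite von Neumann algebra $M = \pi_\varphi(\Gamma)''$ equipped with the faithful normal tracial state $\tau(x) = \langle x\xi_\varphi, \xi_\varphi \rangle$, so that $\tau \circ \pi_\varphi = \varphi$. The conjugation action $\Ad\pi_\varphi : \Gamma \curvearrowright M$ is by (inner, hence trace-preserving) automorphisms whose fixed-point subalgebra is $M \cap M' = \mathscr{Z}(M)$. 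A standard fact identifies $\pi_\varphi$ being amenable in Bekka's sense with $M$ being injective. The support condition $\varphi|_{\Gamma \setminus \mathscr{Z}(\Gamma)} = 0$ is equivalent to $\tau(\pi_\varphi(\gamma)) = 0$ for every $\gamma \notin \mathscr{Z}(\Gamma)$.

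Assume that $\pi_\varphi$ is not amenable. Let $(B, \nu_B)$ denote the Poisson boundary of $G$ associated to an admissible measure. Using amenability of the restricted $\Gamma$-action on $(B, \nu_B)$ (Zimmer) together with the $\Gamma$-invariance of $\tau$, one produces a $\Gamma$-equivariant faithful normal ucp map $\Phi : L^\infty(B, \nu_B) \to M$. The central input from \cite{BH19, BBHP20}, namely the dichotomy for such equivariant maps, now forces a non-injective $M$ to absorb $\Phi$ into its center: $\Phi(L^\infty(B, \nu_B)) \subset \mathscr{Z}(M)$.

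Granting this rigidity for every $\Phi$ one can construct, one recovers the support claim via Theorem A. Concretely, I consider the weak-$\ast$ compact convex $\Gamma$-invariant subsets of $\mathscr{P}(\Gamma)$ built from positive definite functions of the form $\gamma \mapsto \omega(\Phi(f)\,\pi_\varphi(\gamma))$, where $\omega$ ranges over normal states on $\mathscr{Z}(M)$ and $f$ over suitable positive elements of $L^\infty(B, \nu_B)$. Theorem A yields a character inside such a set, and the constraint $\Phi(L^\infty(B, \nu_B)) \subset \mathscr{Z}(M)$, combined with the $G$-ergodicity and mixing properties of the Poisson boundary action, forces this character to coincide with a character of $\mathscr{Z}(\Gamma)$ extended by zero. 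Tracing back through the GNS identification then yields $\varphi(\gamma) = 0$ for $\gamma \notin \mathscr{Z}(\Gamma)$. The property (T) refinement is immediate: if $G$ has a simple property (T) factor, then $\Gamma$ inherits property (T) by Kazhdan, and by Bekka's theorem any amenable unitary representation of $\Gamma$ contains a finite-dimensional subrepresentation.

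The main obstacle is the dichotomy for equivariant ucp maps $L^\infty(B, \nu_B) \to M$ invoked in the second step: this is the noncommutative analog of Margulis's factor theorem that constitutes the technical heart of \cite{BH19, BBHP20} and relies on a careful interplay between standard-form techniques for finite von Neumann algebras, the semisimple structure of $G$, and the $\Gamma$-dynamics on the Poisson boundary. The secondary (but still delicate) task is to verify that the output of Theorem A applied to the auxiliary convex sets truly encodes the vanishing of $\varphi$ outside $\mathscr{Z}(\Gamma)$, rather than merely factoring through a proper intermediate quotient of $\Gamma$.
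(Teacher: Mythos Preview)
Your proposal contains genuine gaps, both in the construction and in the logic.

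First, the direction of your ucp map is reversed relative to the paper's framework. A \emph{boundary structure} in the sense of this paper is a $\Gamma$-equivariant faithful normal ucp map $\Phi : M \to \rL^\infty(G/P)$, and the dynamical dichotomy (Theorem~\ref{thm:dynamical}) is stated and proved for maps in that direction. Zimmer amenability of $\Gamma \curvearrowright G/P$ yields $\Gamma$-equivariant maps from separable $\Gamma$-$\rC^*$-algebras \emph{into} $\rL^\infty(G/P)$ (via equivariant boundary maps $G/P \to \mathfrak S(A)$), not the other way around; your claimed construction of a $\Gamma$-equivariant ucp map $\rL^\infty(B,\nu_B)\to M$ is not justified, and the dichotomy you invoke (``non-injective $M$ forces $\Phi(\rL^\infty(B))\subset\mathscr Z(M)$'') is not the dichotomy proved in the paper.

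Second, and more importantly, the paper does \emph{not} apply the dichotomy to $M=\pi_\varphi(\Gamma)''$ directly. It applies it to the \emph{noncommutative Poisson boundary} $\mathscr B$ (Example~\ref{example:boundary-characters}): an amenable von Neumann algebra containing $\bfC 1\otimes\pi_\varphi(\Gamma)''$, equipped with the ergodic action $\Ad(\pi_\varphi):\Gamma\curvearrowright\mathscr B$ and the boundary structure $\Phi:\mathscr B\to\rL^\infty(G/P)$, $f\mapsto(b\mapsto\langle f(b)\xi,\xi\rangle)$. The dichotomy then reads: if $\Phi$ is invariant, one shows $\mathscr B=\bfC 1\otimes\pi_\varphi(\Gamma)''$, so $\pi_\varphi(\Gamma)''$ inherits amenability from $\mathscr B$; if $\Phi$ is singular, Proposition~\ref{prop:dynamical-singular} gives $\Phi(\pi_\varphi(\gamma))=0$ for $\gamma\notin\mathscr Z(\Gamma)$, hence $\varphi(\gamma)=0$ directly. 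No appeal to Theorem~\ref{thm:main} or to auxiliary convex subsets of $\mathscr P(\Gamma)$ is needed; your third paragraph is an unnecessary detour with unclear endpoint.

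Finally, your property~(T) argument is incorrect as stated: if $G$ merely has a simple factor with property~(T) (e.g.\ $G=\SL_2(\bfR)\times\SL_3(\bfR)$), then $G$ need not have property~(T), and hence neither does $\Gamma$. The paper treats only the case where $G$ itself has property~(T) by your argument, and explicitly defers the general case to \cite[Proposition~7.5]{BBHP20}.
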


Theorem \ref{thm:characters} generalizes Margulis'\! normal subgroup theorem \cite{Ma91} and Stuck--Zimmer's stabilizer rigidity theorem \cite{SZ92}. Also, Theorem \ref{thm:characters} solved a conjecture formulated by Connes (see \cite{Jo00}). For other recent results regarding classification of characters, we refer the reader to \cite{CP13, PT13, Be19, BF20, LL20}.

Combining Theorems \ref{thm:main} and \ref{thm:characters}, we obtain new results regarding the simplicity and the unique trace property for the $\rC^*$-algebra $\rC^*_\pi(\Gamma)$ associated with an arbitrary nonamenable (resp.\! weakly mixing) unitary representation $\pi : \Gamma \to \mathscr U(\mathscr H_\pi)$. In particular, Corollary \ref{cor:C*} provides a far reaching generalization of the results obtained by Bekka--Cowling--de la Harpe \cite{BCH94} for the reduced $\rC^*$-algebra $\rC^*_\lambda(\Gamma)$.

\begin{lettercor}[\cite{BH19, BBHP20}]\label{cor:C*}
Let $\Gamma < G$ be any higher rank lattice. Let $\pi : \Gamma \to \mathscr U(\mathscr H_\pi)$ be any unitary representation. Then $\rC^*_\pi(\Gamma)$ admits a trace. 

Assume moreover that $G$ has trivial center. If $\pi$ is not amenable, then $\lambda$ is weakly contained in $\pi$ and the unique $\ast$-homomorphism $\Theta : \rC^*_\pi(\Gamma) \to \rC^*_\lambda(\Gamma) : \pi(\gamma) \mapsto \lambda(\gamma)$ satisfies the following properties:
\begin{enumerate}
\item $\tau_\Gamma \circ \Theta$ is the unique trace on $\rC^*_\pi(\Gamma)$.
\item $\ker(\Theta)$ is the unique proper maximal ideal of $\rC^*_\pi(\Gamma)$.
\end{enumerate}
In case $G$ has property \emph{(T)}, the above properties hold as soon as $\pi$ does not contain any nonzero finite dimensional subrepresentation.
\end{lettercor}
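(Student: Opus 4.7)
The plan is to derive every assertion from Theorem~\ref{thm:main} applied to suitable $\Gamma$-invariant weak-$*$ compact convex subsets of $\mathfrak S(\rC^*_\pi(\Gamma))$, combined with the character dichotomy of Theorem~\ref{thm:characters}. For the first statement, I would observe that the map $\mathfrak S(\rC^*_\pi(\Gamma)) \hookrightarrow \mathscr P(\Gamma) : \psi \mapsto \psi \circ \pi$ is a $\Gamma$-equivariant embedding onto a nonempty weak-$*$ compact convex set. Theorem~\ref{thm:main} then produces a character in the image, and such a character corresponds to a state $\psi \in \mathfrak S(\rC^*_\pi(\Gamma))$ invariant under all inner automorphisms $\Ad(\pi(\gamma))$; since $\pi(\Gamma)$ generates $\rC^*_\pi(\Gamma)$ as a $\rC^*$-algebra, $\psi$ is automatically a trace.

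Now assume $G$ has trivial center. By Borel density for higher rank lattices, $\Gamma$ is Zariski dense in $G$, so the centralizer of $\Gamma$ in $G$ equals $G$ and $\mathscr Z(\Gamma) \subset \mathscr Z(G) = \{e\}$; in particular, any character of $\Gamma$ supported on $\mathscr Z(\Gamma)$ is forced to equal $\delta_e$. Given any trace $\tau$ on $\rC^*_\pi(\Gamma)$, the associated character $\varphi = \tau \circ \pi$ has GNS representation $\pi_\varphi$ weakly contained in $\pi$; if $\pi_\varphi$ were amenable, then the chain $1_\Gamma \prec \pi_\varphi \otimes \overline{\pi_\varphi} \prec \pi \otimes \overline{\pi}$ would make $\pi$ itself amenable, contradicting the hypothesis. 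Theorem~\ref{thm:characters} therefore forces $\varphi = \delta_e$. Since the GNS representation of $\delta_e$ is $\lambda$, this simultaneously yields $\lambda \prec \pi$, the existence of $\Theta$, and the identity $\tau = \tau_\Gamma \circ \Theta$, which is item~(1).

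For item~(2), I would take any proper closed two-sided ideal $I \subset \rC^*_\pi(\Gamma)$. Inner automorphisms of a unital $\rC^*$-algebra preserve two-sided ideals, so $\Ad(\pi(\gamma)) I = I$ for every $\gamma \in \Gamma$, which makes the annihilator $I^\perp = \{\psi \in \mathfrak S(\rC^*_\pi(\Gamma)) : \psi|_I = 0\}$ a nonempty $\Gamma$-invariant weak-$*$ compact convex subset of $\mathscr P(\Gamma)$. Theorem~\ref{thm:main} together with the dichotomy of the previous paragraph places $\delta_e = \tau_\Gamma \circ \Theta$ inside $I^\perp$; then the faithfulness of $\tau_\Gamma$ on $\rC^*_\lambda(\Gamma)$ upgrades $\tau_\Gamma(\Theta(a^*a)) = 0$ for $a \in I$ to $\Theta(a) = 0$, whence $I \subset \ker(\Theta)$ and $\ker(\Theta)$ is the unique maximal proper ideal. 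The main obstacle is conceptual rather than technical: all the real work has been concentrated in Theorems~\ref{thm:main} and~\ref{thm:characters}, and one only needs to identify which $\Gamma$-invariant convex sets encode which structural property so that these rigidity inputs can be brought to bear. The property~(T) variant is handled identically, replacing the amenability step by the sharper dichotomy of Theorem~\ref{thm:characters} together with the well-known fact that for groups with property~(T) a finite dimensional representation weakly contained in $\pi$ must be contained in $\pi$; this converts the weaker hypothesis ``$\pi$ has no nonzero finite dimensional subrepresentation'' into the same final contradiction.
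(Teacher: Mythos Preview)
Your argument is correct. The existence of a trace and item~(1) follow the paper's proof essentially verbatim; one small slip is that the centralizer of $\Gamma$ in $G$ equals $\mathscr Z(G)$, not $G$, but your conclusion $\mathscr Z(\Gamma)\subset\mathscr Z(G)=\{e\}$ is clearly what you intended.

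For item~(2) you take a genuinely different route. The paper passes to the quotient $\rho:\rC^*_\pi(\Gamma)\to\rC^*_\pi(\Gamma)/J$ and reruns the representation-theoretic argument of item~(1) on the unitary representation $\rho\circ\pi$, which is still weakly contained in $\pi$ and hence nonamenable; this yields a factorization $\Theta=\overline\Theta\circ\rho$ and thus $J\subset\ker\Theta$. You instead stay on the state-space side: the set $I^\perp$ of states vanishing on $I$ is a nonempty $\Gamma$-invariant weak-$\ast$ compact convex subset, Theorem~\ref{thm:main} locates a trace there, and the uniqueness already established in item~(1) forces this trace to be $\tau_\Gamma\circ\Theta$, whence $I\subset\ker\Theta$ by faithfulness of $\tau_\Gamma$. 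Your approach is slightly more economical in that it invokes item~(1) as a black box rather than replaying its proof, and it makes the role of Theorem~\ref{thm:main} as a fixed-point device more uniform; the paper's approach keeps the weak-containment picture in the foreground. Both are equally valid. Your treatment of the property~(T) clause (isolation of finite-dimensional irreducibles in the unitary dual) correctly fills in a step the paper leaves implicit.
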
 

In case $\Lambda$ is a countable discrete group and $\pi = \lambda$, Hartman--Kalantar \cite{HK17} initiated the study of the noncommutative dynamical system $\Lambda \curvearrowright \rC^*_\lambda(\Lambda)$ and obtained a new characterization of the simplicity and the unique trace property for $\rC^*_\lambda(\Lambda)$ (see also \cite{KK14, BKKO14}).

As a byproduct of our operator algebraic methods, we also obtain a topological analogue of Stuck--Zimmer's stabilizer rigidity theorem \cite{SZ92}. In particular, our next result gives a positive answer to a recent problem raised by Glasner--Weiss \cite{GW14}.

\begin{letterthm}[\cite{BH19, BBHP20}]\label{thm:dynamics}
Let $\Gamma < G$ be any higher rank lattice and assume that $G$ has trivial center. Let $\Gamma \curvearrowright X$ be any minimal action on a compact space. Then at least one of the following assertions hold:
\begin{enumerate}
\item There exists a $\Gamma$-invariant Borel probability measure on $X$.

\item The action $\Gamma \curvearrowright X$ is topologically free.
\end{enumerate}
In case $G$ has a simple factor with property \emph{(T)}, either $X$ is finite or the action $\Gamma \curvearrowright X$ is topologically free.
\end{letterthm}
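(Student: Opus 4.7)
The plan is to adapt Margulis' and Stuck--Zimmer's strategy to the topological setting by translating failure of topological freeness into a statement about characters, and then invoking Theorems \ref{thm:main} and \ref{thm:characters}. Assume $X$ carries no $\Gamma$-invariant Borel probability measure, and aim to show $\Gamma \curvearrowright X$ is topologically free. To each $x \in X$ I attach the positive definite function $\varphi_x \coloneqq \mathbf 1_{\Gamma_x}$, the diagonal matrix coefficient of the quasi-regular representation $\lambda_{\Gamma/\Gamma_x}$. A direct computation from the definition of the conjugation action gives $\gamma \cdot \varphi_x = \varphi_{\gamma x}$, so the weak-$\ast$ closed convex hull $\mathscr C \subset \mathscr P(\Gamma)$ of $\{\varphi_x : x \in X\}$ is nonempty, $\Gamma$-invariant, weak-$\ast$ compact, and convex. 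By Theorem \ref{thm:main}, $\mathscr C$ contains a character $\varphi$. Since $G$ is center-free, Borel density forces $\mathscr Z(\Gamma) = \{e\}$, so Theorem \ref{thm:characters} leaves only two alternatives: $\varphi = \delta_e$ or $\pi_\varphi$ is amenable.

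The amenable alternative should be ruled out by our standing hypothesis. Indeed, each $\varphi \in \mathscr C$ has $\pi_\varphi$ weakly contained in a direct integral of quasi-regular representations $\lambda_{\Gamma/\Gamma_x}$; amenability of $\pi_\varphi$ then propagates to co-amenability of $\Gamma_x$ in $\Gamma$ for $x$ in a set of positive mass. A $\Gamma$-invariant mean on $\Gamma/\Gamma_x$ pushes through the orbit map $\Gamma/\Gamma_x \to \Gamma x \subset X$ to a $\Gamma$-invariant state on $C(X)$, hence a $\Gamma$-invariant Borel probability measure on $X$, contradicting our assumption. Thus $\varphi = \delta_e$. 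We now want to derive topological freeness from this equality. Suppose for contradiction that there exist $\gamma_0 \in \Gamma \setminus \{e\}$ and a nonempty open set $V \subset \Fix(\gamma_0)$; by minimality $\Gamma V = X$, so every stabilizer $\Gamma_x$ contains a $\Gamma$-conjugate of $\gamma_0$.

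The crux is to build a refined $\Gamma$-invariant weak-$\ast$ compact convex subset $\mathscr C' \subset \mathscr P(\Gamma)$ that forces the character produced by Theorem \ref{thm:main} to satisfy $\varphi(\gamma_0) > 0$, contradicting $\varphi = \delta_e$. The natural candidate comes from boundary theory: pick an admissible probability $\mu$ on $\Gamma$, a $\mu$-stationary probability measure $\nu_X$ on $X$, and the associated Furstenberg boundary map $G/P \to \Prob(X)$. The resulting $\Gamma$-equivariant UCP map $C(X) \to L^\infty(G/P)$ is precisely the kind of object governed by the ``dynamical dichotomy theorem for equivariant UCP maps'' announced in the abstract, which I would use to transfer the topological datum $V \subset \Fix(\gamma_0)$ through the Poisson boundary into $\Gamma$-invariant positive definite data genuinely detecting $\gamma_0$. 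This boundary-theoretic refinement is the \emph{main obstacle}: na\"ive $\Gamma$-symmetrization of $V$-supported data averages out the fixed-point information, so the noncommutative boundary dichotomy is what carries the topological datum through the $\Gamma$-averaging while preserving both $\Gamma$-invariance and weak-$\ast$ compactness of $\mathscr C'$. Finally, the property (T) addendum is an immediate strengthening: the second half of Theorem \ref{thm:characters} upgrades amenability of $\pi_\varphi$ to containment of a finite-dimensional subrepresentation, which via the weak containment in $\lambda_{\Gamma/\Gamma_x}$ forces some $\Gamma_x$ to be cofinite in $\Gamma$ and hence $X = \Gamma \cdot x$ finite by minimality.
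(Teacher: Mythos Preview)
Your detour through Theorems \ref{thm:main} and \ref{thm:characters} is not how the paper proceeds, and more importantly it does not close. The paper's argument (indicated as ``similar to Theorem \ref{thm:main}'' and set up in Example \ref{example:boundary-top}) applies the dynamical dichotomy Theorem \ref{thm:dynamical} \emph{directly} to the commutative boundary structure $\Phi : \rL^\infty(X,\nu) \to \rL^\infty(G/P)$ associated with an extremal $\mu_\Gamma$-stationary measure $\nu \in \Prob(X)$. If $\Phi$ is invariant, then $\nu$ is $\Gamma$-invariant. If $\Phi$ is singular, then for every $\gamma \neq e$ and almost every $b$ one has $\gamma_\ast\beta_b \perp \beta_b$ in $\Prob(X)$; but if some $\gamma_0 \neq e$ fixed a nonempty open $V$, minimality gives $\supp(\nu)=X$, hence $\nu(V)>0$, hence $\beta_b(V)>0$ for a positive set of $b$, and on $V$ the measures $\gamma_{0\ast}\beta_b$ and $\beta_b$ coincide---contradicting singularity. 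That is the whole proof of the first dichotomy.

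Your proposal never reaches this point. You yourself flag the construction of the refined set $\mathscr C'$ as ``the main obstacle'' and then defer it to the very dichotomy theorem that, applied to $\rL^\infty(X,\nu)$ rather than to $\mathscr P(\Gamma)$, already finishes the argument without any character machinery. So the proof as written is genuinely incomplete: nothing in your text produces a character with $\varphi(\gamma_0)>0$, and there is no evident way to do so from the convex hull of the $\mathbf 1_{\Gamma_x}$ alone (each $\varphi_x$ detects a \emph{conjugate} of $\gamma_0$, not $\gamma_0$ itself, and weak-$\ast$ limits can wash this out). Your Step 6 is also unjustified: from amenability of $\pi_\varphi$ for an abstract weak-$\ast$ limit $\varphi \in \mathscr C$ you cannot legitimately extract co-amenability of a specific $\Gamma_x$---the weak-containment chain you invoke does not run in that direction. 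For the property (T) addendum, the direct route is again cleaner: once an invariant $\nu$ exists, apply Theorem \ref{thm:characters} to the character $\gamma \mapsto \nu(\Fix(\gamma))$; since $\supp(\nu)=X$, essential freeness of $(X,\nu)$ forces empty interior of each $\Fix(\gamma)$, and the finite-dimensional alternative forces a finite-index stabilizer, hence a finite orbit, hence $X$ finite by minimality.
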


As we will explain, all the main results stated above are consequences of a dynamical dichotomy theorem for $\Gamma$-equivariant faithful normal unital completely positive (ucp) maps $\Phi : M \to \rL^\infty(G/P)$, where $M$ is an arbitrary von Neumann algebra endowed with an ergodic action $\Gamma \curvearrowright M$ (see Theorem \ref{thm:NCNZ} and Theorem \ref{thm:dynamical} below). This dynamical dichotomy theorem is one of the key novelties of our operator algebraic framework. Both its statement and its proof rely on von Neumann algebra theory and depend heavily on whether the connected semisimple real Lie group $G$ is simple or not. 

In our first joint work \cite{BH19}, we dealt with the case where $G$ is simple and $\rk_{\bfR}(G) \geq 2$ (and more generally where all its simple factors $G_i$ satisfy $\rk_{\bfR}(G_i) \geq 2$). In that case, we obtained the following noncommutative analogue of Nevo--Zimmer's structure theorem \cite{NZ97, NZ00}. We denote by $P < G$ a minimal parabolic subgroup and whenever $P < Q < G$ is an intermediate parabolic subgroup, we denote by $p_Q : G/P \to G/Q : gP \mapsto gQ$ the canonical factor map and by $p_Q^\ast : \rL^\infty(G/Q) \to \rL^\infty(G/P) : f \mapsto f \circ p_Q$ the corresponding unital normal embedding.

\begin{letterthm}[\cite{BH19}]\label{thm:NCNZ}
Let $\Gamma < G$ be any higher rank lattice and assume that $G$ is simple. Let $M$ be any von Neumann algebra, $\Gamma \curvearrowright M$ any ergodic action and $\Phi : M \to \rL^\infty(G/P)$ any $\Gamma$-equivariant faithful normal ucp map. Then the following dichotomy holds:
\begin{itemize}
\item Either $\Phi(M) = \bfC 1$.
\item Or there exist a proper parabolic subgroup $P < Q < G$ and a $\Gamma$-equivariant unital normal embedding $\iota : \rL^\infty(G/Q) \hookrightarrow M$ such that $\Phi \circ \iota = p_Q^\ast$. 
\end{itemize}
\end{letterthm}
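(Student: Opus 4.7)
The plan is to combine an induction to the ambient Lie group $G$ with a noncommutative contraction argument driven by the Weyl chamber dynamics. First I would pass from the $\Gamma$-equivariant problem to a $G$-equivariant one: consider the coinduced von Neumann algebra $\tilde M = (\rL^\infty(G) \ovt M)^\Gamma$, the fixed-point algebra under the diagonal $\Gamma$-action combining right translation on $G$ with the given action on $M$. It carries a natural $G$-action by left translation, and ergodicity of $\Gamma \curvearrowright M$ together with $\Gamma < G$ being a lattice ensures the $G$-action is ergodic. A standard Frobenius-type reciprocity identifies $\Gamma$-equivariant ucp maps $M \to \rL^\infty(G/P)$ with $G$-equivariant ucp maps $\tilde M \to \rL^\infty(G/P)$, so $\Phi$ extends to a $G$-equivariant faithful normal ucp map $\tilde\Phi : \tilde M \to \rL^\infty(G/P)$; it then suffices to establish the dichotomy for $\tilde\Phi$ in the $G$-equivariant category and descend back to $\Phi$ by reciprocity.

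The heart of the argument is a noncommutative analog of Furstenberg's lemma on contracting measures on projective boundaries. Writing the Langlands decomposition $P = MAN$ with $A$ a maximal $\bfR$-split torus and $N$ the unipotent radical, I would exploit that conjugation by $a$ in the positive Weyl chamber $A^+$ contracts each simple horospherical subgroup $N_\alpha \subset N$ indexed by a simple root $\alpha$. The idea is to form Cesaro averages of $\tilde\Phi$ under conjugation by $a^n$, extract weak-$\ast$ limit ucp maps by compactness of the space of ucp maps $\tilde M \to \rL^\infty(G/P)$, and show such limits take values in the $N_\alpha$-invariants $\rL^\infty(G/P)^{N_\alpha}$. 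Combined with noncommutative Mautner/Howe--Moore arguments that control the interaction between the $A$-dynamics and faithful normal ucp maps, this produces strong constraints on $\tilde\Phi(\tilde M)$; the hypothesis $\rk_\bfR(G) \geq 2$ is crucial here, as it provides multiple independent contracting directions needed to pin down invariance.

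Let $\Sigma$ be the set of simple roots $\alpha$ for which $\tilde\Phi(\tilde M) \not\subseteq \rL^\infty(G/P)^{N_\alpha}$. If $\Sigma$ is empty, then $\tilde\Phi(\tilde M) \subseteq \rL^\infty(G/P)^N = \bfC 1$ by the standard fact that $N$ has a free transitive orbit on the open dense big Bruhat cell of $G/P$, giving the first alternative. Otherwise, the simple roots outside $\Sigma$ together with $P$ generate a proper intermediate parabolic $P < Q \subsetneq G$ whose unipotent radical $Q_u$ annihilates $\tilde\Phi(\tilde M)$, so $\tilde\Phi(\tilde M) \subseteq \rL^\infty(G/P)^{Q_u} = p_Q^\ast(\rL^\infty(G/Q))$. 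A multiplicative domain argument then produces a $G$-equivariant unital normal embedding $\tilde\iota : \rL^\infty(G/Q) \hookrightarrow \tilde M$ with $\tilde\Phi \circ \tilde\iota = p_Q^\ast$: elements of $p_Q^\ast(\rL^\infty(G/Q))$ lie in the multiplicative domain of $\tilde\Phi$ because faithfulness, normality, and $G$-equivariance into a commutative target preclude Schwarz-inequality defects, and their preimages yield the embedding, which then descends to the required $\iota$ on $M$ via reciprocity.

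The main obstacle I anticipate is twofold. First, making the noncommutative Cesaro averaging rigorous requires careful handling of weak-$\ast$ limits of ucp maps while preserving enough information (faithfulness, normality, and the ability to recover the original $\tilde\Phi$) to pin down invariance rather than simply averaging everything away. Second, extracting the parabolic $Q$ from the higher rank hypothesis $\rk_\bfR(G) \geq 2$ mirrors the subtle disintegration step of the classical Nevo--Zimmer proof and likely demands a bespoke noncommutative selection argument combining Howe--Moore ergodicity with the structure of horospherical subgroups. Once these steps are in place, the descent from $\tilde\Phi$ to the desired $\iota$ on $M$ via reciprocity should be routine.
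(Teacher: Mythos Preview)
Your induction step is essentially the paper's Step 1, and the instinct to use Weyl chamber contraction and Mautner phenomena is in the right spirit. But the multiplicative domain argument in your third paragraph has a genuine gap, and it is precisely the point where the paper's proof does something substantively different.

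The gap is this. Suppose you succeed in showing $\tilde\Phi(\tilde M) \subset p_Q^\ast(\rL^\infty(G/Q))$ for some proper parabolic $Q$. This is a statement about the \emph{image} of $\tilde\Phi$; the theorem requires an embedding $\iota : \rL^\infty(G/Q) \hookrightarrow \tilde M$ into the \emph{domain} with $\tilde\Phi \circ \iota = p_Q^\ast$. These are not the same direction, and no general multiplicative-domain principle bridges them. The multiplicative domain of $\tilde\Phi$ is a $\rC^\ast$-subalgebra of $\tilde M$ on which $\tilde\Phi$ is a $\ast$-homomorphism, but nothing forces $\tilde\Phi$ to be surjective from this subalgebra onto $p_Q^\ast(\rL^\infty(G/Q))$, and there is no canonical way to choose preimages that assemble into a $\ast$-homomorphism. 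Your justification (``faithfulness, normality, and $G$-equivariance into a commutative target preclude Schwarz-inequality defects'') is simply false: a faithful normal state on a noncommutative factor is $G$-equivariant for the trivial action and has commutative range $\bfC$, yet its multiplicative domain is $\bfC 1$. Incidentally, your combinatorics also appear inverted: with your definition of $\Sigma$, the unipotent radical $Q_u$ of the parabolic generated by $P$ and the simple roots outside $\Sigma$ contains the $N_\alpha$ for $\alpha \in \Sigma$, which are exactly the directions in which invariance \emph{fails}.

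The paper's route avoids this entirely by reducing to the commutative Nevo--Zimmer theorem rather than trying to build $\iota$ directly. After induction, one forms the $P$-invariant state $\psi = \beta_P$, passes to its GNS algebra $N$, and uses a single contracting element $s$ in a carefully chosen sub-torus (this is where $\rk_\bfR(G)\geq 2$ enters) to build a $G$-invariant subalgebra $\mathscr M_0 = \widehat M \cap \Ind_P^G(N^s)$ with $\widehat\Phi(\mathscr M_0) \neq \bfC 1$. The decisive new idea is then to sandwich $\mathscr M_0$ between $\bfC 1 \ovt \mathscr N_0$ and $\rL^\infty(\overline V_0) \ovt \mathscr N_0$ and apply Ge--Kadison's tensor splitting theorem to conclude that the \emph{center} $\mathscr Z(\mathscr M_0)$ is strictly larger than $\bfC 1 \ovt \mathscr Z(\mathscr N_0)$, hence $\widehat\Phi(\mathscr Z(\mathscr M_0)) \neq \bfC 1$. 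Now $\mathscr Z(\mathscr M_0)$ is a \emph{commutative} ergodic $G$-von Neumann algebra with a non-invariant $G$-boundary structure, so the classical Nevo--Zimmer theorem applies and directly yields the $G$-equivariant embedding $\rL^\infty(G/Q) \hookrightarrow \mathscr Z(\mathscr M_0) \subset \widehat M$; the descent to $M$ is then a short evaluation argument. In other words, the paper does not attempt to invert $\widehat\Phi$ at all: it finds a commutative piece inside $\widehat M$ large enough to invoke the measurable theorem, and that theorem hands you the embedding for free.
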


Theorem \ref{thm:NCNZ} extends the work of Nevo--Zimmer in two ways. Firstly, we deal with arbitrary von Neumann algebras $M$ (instead of measure spaces $(X, \nu)$) and secondly, we deal with $\Gamma$-actions $\Gamma \curvearrowright M$ (instead of $G$-actions $G \curvearrowright (X, \nu)$). We refer to Section \ref{section:structures} for the correspondence between equivariant ucp maps and stationary states. The remarkable feature of Theorem \ref{thm:NCNZ} is that when $\Phi : M \to \rL^\infty(G/P)$ is not invariant, there is a nontrivial $\Gamma$-invariant \emph{commutative} von Neumann subalgebra $M_0 \subset M$ such that $M_0 \cong \rL^\infty(G/Q)$. This allows us to exploit the dynamical properties of the ergodic action $\Gamma \curvearrowright G/Q$ and the fact that every noncentral element $\gamma \in \Gamma \setminus \mathscr Z(\Gamma)$ acts (essentially) freely on $G/Q$. In particular in \cite{BH19}, we used Theorem \ref{thm:NCNZ} to derive all the main results stated above.

In our second joint work \cite{BBHP20}, we dealt with the case where $G$ is not simple. We point out that when $G$ has a rank one simple factor (e.g.\ $G = \SL_2(\bfR) \times \SL_2(\bfR)$), a Nevo--Zimmer structure theorem does not hold and the method used in \cite{BH19} to prove the main results does not apply. Instead, we proceeded as follows \cite{BBHP20}. Firstly, for \emph{any} higher rank lattice $\Gamma < G$, we formulated a general dynamical dichotomy theorem \emph{invariant vs.\ singular} for $\Gamma$-equivariant faithful normal ucp maps $\Phi : M \to \rL^\infty(G/P)$ (see Theorem \ref{thm:dynamical} below) and we showed that all the main results stated above can be derived from this general dichotomy theorem. In case $G$ is simple, the dynamical dichotomy Theorem \ref{thm:dynamical} is a straightforward consequence of Theorem \ref{thm:NCNZ}. Secondly, to prove the dynamical dichotomy Theorem \ref{thm:dynamical} in the case where $G$ is not simple, we developed a new method based on the product structure in $G$. In that respect, the tools developed in \cite{BH19} and \cite{BBHP20} are complementary.

For any nonsingular action $\Lambda \curvearrowright (X, \nu)$ on a standard probability space, we denote by $\rL(\Lambda \curvearrowright X)$ the corresponding \emph{group measure space} von Neumann algebra (see Section \ref{section:preliminaries}). For higher rank lattices $\Gamma < G$ where $G$ is simple with trivial center, we present yet another application of Theorem \ref{thm:NCNZ} that will appear in \cite{BBH21} (joint work with U.\ Bader and R.\ Boutonnet). The next result can be regarded as a noncommutative analogue of Margulis'\! factor theorem.

\begin{lettercor}[\cite{BBH21}]\label{thm:NCFT}
Let $\Gamma < G$ be any higher rank lattice and assume that $G$ is simple with trivial center. Let  $\rL(\Gamma) \subset M \subset \rL(\Gamma \curvearrowright G/P)$ be any intermediate von Neumann subalgebra. Then there is a unique intermediate parabolic subgroup $P < Q < G$ such that 
\begin{equation*}
M = \rL(\Gamma \curvearrowright G/Q).
\end{equation*}
\end{lettercor}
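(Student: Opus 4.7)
The plan is to combine Theorem \ref{thm:NCNZ} with the classical Margulis factor theorem, using the canonical conditional expectation of $\mathcal{M} := \rL(\Gamma \curvearrowright G/P) = \rL^\infty(G/P) \rtimes \Gamma$ onto its Cartan subalgebra. Let $E : \mathcal{M} \to \rL^\infty(G/P)$ denote this expectation and set $\omega_0 := \omega \circ E$, where $\omega$ is the stationary probability measure on $G/P$. The group $\Gamma$ acts on $M$ by $\Ad(u_\gamma)$, which is an inner action since $\rL(\Gamma) \subset M$; its fixed-point subalgebra equals $\rL(\Gamma)' \cap M \subset \rL(\Gamma)' \cap \mathcal{M}$, and the latter reduces to $\bfC$ by weak-mixing properties of the Furstenberg--Poisson boundary action. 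Hence the $\Ad$-action is ergodic and $\Phi := E|_M : M \to \rL^\infty(G/P)$ is a $\Gamma$-equivariant faithful normal ucp map.

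Applying Theorem \ref{thm:NCNZ} yields two cases. If $\Phi(M) = \bfC$, then $xu_\gamma^* \in M$ forces $E(xu_\gamma^*) \in \bfC$ for every $x \in M$ and $\gamma \in \Gamma$, so every Fourier coefficient of $x$ is scalar, whence $M = \rL(\Gamma) = \rL(\Gamma \curvearrowright G/G)$ and we take $Q = G$. Otherwise there exist a proper parabolic $P < Q' < G$ and a $\Gamma$-equivariant unital normal embedding $\iota : \rL^\infty(G/Q') \hookrightarrow M$ with $\Phi \circ \iota = p_{Q'}^*$. In the GNS space $\rL^2(\mathcal{M}, \omega_0)$, the expectation $E$ is the orthogonal projection onto $\rL^2(G/P,\omega)$; since $\iota$ is a $*$-homomorphism with $E \circ \iota = p_{Q'}^*$,
\[
\|\iota(f)\|_{\omega_0}^2 = \omega_0(\iota(f^*f)) = \omega(p_{Q'}^*(f^*f)) = \|p_{Q'}^*(f)\|_{\omega_0}^2,
\]
and Pythagoras then forces $\iota(f) = p_{Q'}^*(f)$ for every $f$. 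Hence $\rL^\infty(G/Q') \subset M$ as subalgebras of $\mathcal{M}$.

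To pin down the correct $Q$, set $B := M \cap \rL^\infty(G/P)$, a $\Gamma$-invariant von Neumann subalgebra of $\rL^\infty(G/P)$. By the classical Margulis factor theorem \cite[Theorem IV.2.11]{Ma91}, $B = \rL^\infty(G/Q)$ for a unique intermediate parabolic $P < Q < G$ (necessarily with $Q \subset Q'$). Combined with $\rL(\Gamma) \subset M$, this immediately gives the inclusion $\rL(\Gamma \curvearrowright G/Q) = \rL^\infty(G/Q) \rtimes \Gamma \subset M$, and uniqueness of $Q$ follows from $\rL^\infty(G/Q) = M \cap \rL^\infty(G/P)$ together with classical Margulis.

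I expect the main obstacle to be the reverse inclusion $M \subset \rL(\Gamma \curvearrowright G/Q)$, which amounts to showing $\Phi(M) \subset B$, i.e., that $E$ sends $M$ into itself; once this is established, every Fourier coefficient $E(xu_\gamma^*)$ of any $x \in M$ belongs to $\rL^\infty(G/Q)$, hence $x \in \rL(\Gamma \curvearrowright G/Q)$. My plan is by contradiction: assuming some $x \in M$ has $E(x) \notin B$, the $\Gamma$-invariant commutative von Neumann algebra generated by the $\Gamma$-orbit of $E(x)$ properly contains $B$ by classical Margulis, and the goal is to realize this larger algebra inside $M \cap \rL^\infty(G/P) = B$, contradicting maximality of $B$. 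A natural route is to apply Theorem \ref{thm:NCNZ} to the enlarged subalgebra $\rL(\Gamma) \vee \{x\}'' \subset M$, and invoke the Pythagorean identification above to bring the resulting new commutative embedding inside $\rL^\infty(G/P)$, exploiting the lattice of standard parabolics containing $P$ to combine it with $B$ and reach the desired contradiction.
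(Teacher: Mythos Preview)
Your setup matches the paper's: restrict the canonical expectation $\rE$ to $M$ and apply Theorem~\ref{thm:NCNZ}. The invariant case is handled identically, and your Pythagorean argument showing that the abstract embedding $\iota$ must equal the canonical inclusion $p_{Q'}^*$ is a clean justification of a step the paper leaves implicit.

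The gap is exactly where you anticipate it, and your proposed fix does not close it. Re-applying Theorem~\ref{thm:NCNZ} to $\rL(\Gamma)\vee\{x\}''$ (or to any intermediate subalgebra) only yields the existence of \emph{some} nontrivial $\rL^\infty(G/Q'')$ inside that algebra; it gives no link between $Q''$ and the specific element $E(x)$. Any such $\rL^\infty(G/Q'')$ sits in $M$, hence by your own Pythagorean identification inside $M\cap\rL^\infty(G/P)=B=\rL^\infty(G/Q)$, which merely says $Q\subset Q''$. You rediscover a \emph{sub}algebra of $B$, never a strictly larger one, so no contradiction arises; iterating is useless since there are only finitely many parabolics and nothing forces the commutative piece produced by Theorem~\ref{thm:NCNZ} to ``see'' $E(x)$. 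The statement $E(M)\subset M$ is in fact equivalent to the corollary itself and does not follow from Theorem~\ref{thm:NCNZ} alone.

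The paper supplies the missing ingredient externally: once $\rL(\Gamma\curvearrowright G/Q)\subset M\subset\rL(\Gamma\curvearrowright G/P)$ is established with $\Gamma\curvearrowright G/Q$ essentially free, Suzuki's intermediate-subalgebra theorem \cite[Theorem~3.6]{Su18} forces $M$ to come from an intermediate $\Gamma$-extension between $G/P$ and $G/Q$, and Margulis' factor theorem \cite[Theorem~IV.2.11]{Ma91} then identifies that extension with some $G/R$. This structural input, not another pass through Theorem~\ref{thm:NCNZ}, is what delivers the reverse inclusion.
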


In Section \ref{section:preliminaries}, we give some preliminary background on Poisson boundaries, semisimple Lie groups and operator algebras. In Section \ref{section:structures}, we introduce the notion of boundary structures for von Neumann algebras and state the dynamical dichotomy theorem. We also outline the main steps of the proof of Theorem \ref{thm:NCNZ}. In Section \ref{section:main}, we sketch the proofs of Theorems \ref{thm:main}, \ref{thm:characters} and Corollary \ref{cor:C*} based on the dynamical dichotomy theorem. We also discuss open problems related to our main results. In Section \ref{section:NCFT}, we discuss Corollary \ref{thm:NCFT} and its relevance for Connes'\! rigidity conjecture.

\begin{Remark}
In this survey article, we only consider higher rank lattices in connected semisimple \emph{real}  Lie groups to simplify the exposition and to focus on the main ideas. However, we point out that all our main results do hold for higher rank lattices in semisimple algebraic groups defined over arbitrary local fields. We refer the reader to \cite{BBHP20, BBH21} for more general statements and further details.
\end{Remark}

\subsection*{Acknowledgments}
I would like to express my deepest gratitude to my co-authors for our exciting joint works. I also thank Adrian Ioana for his useful remarks.

\section{Preliminaries}\label{section:preliminaries}

\subsection{Poisson boundaries}\label{subsection:poisson}

Let $H$ be any locally compact second countable (lcsc) group. We say that a Borel probability measure $\mu \in \Prob(H)$ is \emph{admissible} if the following conditions are satisfied:
\begin{enumerate}
\item $\mu$ is absolutely continuous with respect to the Haar measure;

\item $\supp(\mu)$ generates $H$ as a semigroup;

\item $\supp(\mu)$ contains a neighborhood of the identity element $e \in H$.
\end{enumerate}
We say that a bounded measurable function $F : H \to \bfC$ is (right) $\mu$-\emph{harmonic} if 
\begin{equation*}
\forall g \in H, \quad F(g) = \int_H F(gh) \, \mathrm{d}\mu(h).
\end{equation*}
Any $\mu$-harmonic function is continuous. We denote by $\Har^\infty(H, \mu) \subset \rC_b(H)$ the space of all (right) $\mu$-harmonic functions. The left translation action $\lambda : H \curvearrowright \rC_b(H)$ leaves the subspace $\Har^\infty(H, \mu)$ globally invariant.

Let $(X, \nu)$ be any standard probability space endowed with a measurable action $H \curvearrowright X$. We say that $(X, \nu)$ is a $(H, \mu)$-\emph{space} if $\nu$ is $\mu$-\emph{stationary}, that is, $\mu \ast \nu = \nu$. For any $(H, \mu)$-space $(X, \nu)$, define the \emph{Poisson transform} $\Psi_\mu : \rL^\infty(X, \nu) \to \Har^\infty(H, \mu)$ by the formula
\begin{equation*}
\forall f \in \rL^\infty(X, \nu), \forall g \in H, \quad \Psi_\mu(f)(g) = \int_X f(g x) \, \mathrm{d}\nu(x).
\end{equation*}
The mapping $\Psi_\mu : \rL^\infty(X, \nu) \to \Har^\infty(H, \mu)$ is $H$-equivariant, unital, positive and contractive.  

\begin{theorem}[Furstenberg, \cite{Fu62b}]
There is a unique $(H, \mu)$-space $(B, \nu_B)$ for which the Poisson transform $\Psi_\mu : \rL^\infty(B, \nu_B) \to \Har^\infty(H, \mu)$ is bijective.
\end{theorem}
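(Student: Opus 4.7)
The plan is to construct $(B, \nu_B)$ from the path space of the right $\mu$-random walk, establish bijectivity of $\Psi_\mu$ via bounded martingale convergence, and deduce uniqueness from a universal factoring property.

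\emph{Construction of $(B, \nu_B)$.} I would set $\Omega = H^{\bfN}$ with the Bernoulli measure $\bfP = \mu^{\otimes \bfN}$, and define the random walk $Z_n : \Omega \to H$ by $Z_n(\omega) = \omega_1 \omega_2 \cdots \omega_n$. The tail $\sigma$-algebra $\mathcal{T} = \bigcap_{n \geq 1} \sigma(Z_k : k \geq n)$ of the walk is globally preserved by the left translation action $g \cdot \omega = (g\omega_1, \omega_2, \dots)$, which is nonsingular for $\bfP$. Define $(B, \nu_B)$ as the standard probability space obtained via Mackey's point realization from the von Neumann subalgebra $\rL^\infty(\Omega, \mathcal{T}, \bfP) \subset \rL^\infty(\Omega, \bfP)$ equipped with its induced $H$-action, with $\nu_B = \pi_\ast \bfP$ where $\pi : \Omega \to B$ is the tail projection. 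Stationarity $\mu \ast \nu_B = \nu_B$ follows from shift-invariance of $\mathcal{T}$ under $T(\omega_1, \omega_2, \dots) = (\omega_2, \omega_3, \dots)$ together with the Markov identity $\int_H \bfP_g \, \mathrm{d}\mu(g) = T_\ast \bfP$ on $\mathcal{T}$, where $\bfP_g$ denotes the law of the walk started at $g$.

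\emph{Bijectivity of $\Psi_\mu$.} For surjectivity, harmonicity makes $(F(Z_n))_{n \geq 0}$ a bounded $\bfP$-martingale for every $F \in \Har^\infty(H, \mu)$, so by Doob's theorem it converges $\bfP$-a.s.\ to a bounded $\mathcal{T}$-measurable $\tilde F$. Running the walk from an arbitrary starting point $g \in H$ yields $F(g) = \int_\Omega \tilde F \, \mathrm{d}\bfP_g$, so $\tilde F$ descends to $f \in \rL^\infty(B, \nu_B)$ with $\Psi_\mu(f) = F$. Injectivity reduces to checking that for $f \in \rL^\infty(B, \nu_B)$ the martingale $\Psi_\mu(f)(Z_n)$ converges back to $f \circ \pi$, which is L\'evy's upward theorem applied to $f \circ \pi$ along the filtration $\mathcal{F}_n = \sigma(\omega_1, \dots, \omega_n)$: since $\mathcal{T} \subset \bigvee_n \mathcal{F}_n$ and $f \circ \pi$ is tail-measurable, the conditional expectations recover $f \circ \pi$.

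\emph{Uniqueness and main obstacle.} Given another $(H, \mu)$-space $(X, \nu)$ with bijective Poisson transform $\Psi_\mu^X$, the composition $\Theta \coloneqq (\Psi_\mu^X)^{-1} \circ \Psi_\mu^B : \rL^\infty(B, \nu_B) \to \rL^\infty(X, \nu)$ is a unital, $H$-equivariant, normal, positive bijection preserving stationary states. I would upgrade $\Theta$ to a measure algebra isomorphism by constructing a boundary map $\beta : \Omega \to X$ as the $\bfP$-a.s.\ weak-$\ast$ limit of the measure-valued martingale $\omega \mapsto (Z_n(\omega))_\ast \nu$. The delicate step, and the main obstacle I foresee, is showing this limit is $\bfP$-a.s.\ a Dirac mass: a diffuse boundary limit on a positive-measure event would let one produce bounded $\mu$-harmonic functions (as Poisson transforms of nontrivial functionals of the limit measure) outside the range of $\Psi_\mu^X$, contradicting its surjectivity. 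Once $\beta$ is almost surely point-valued it is tail-measurable and hence factors through a measure-preserving $H$-equivariant map $B \to X$; the symmetric construction yields the inverse, completing the isomorphism.
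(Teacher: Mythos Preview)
The paper does not supply a proof of this theorem; it is quoted as Furstenberg's result with a reference to \cite{Fu62b} and a pointer to \cite{BS04, Fu00} for the construction. Your sketch is precisely the standard path-space construction found in those references: build $(B,\nu_B)$ as the Mackey realization of the tail (equivalently, shift-invariant) $\sigma$-algebra of the right random walk, obtain surjectivity of $\Psi_\mu$ from Doob's bounded martingale convergence, and injectivity from L\'evy's upward theorem via the Markov identity $\mathbf E[f\circ\pi\mid\mathcal F_n]=\Psi_\mu(f)(Z_n)$. The uniqueness argument you outline---that bijectivity of $\Psi_\mu^X$ forces the measure-valued boundary martingale $(Z_n)_\ast\nu$ to converge a.s.\ to Dirac masses---is the standard one; the contrapositive is that a diffuse limit on a positive-measure set would produce strictly more tail-measurable functions than $\rL^\infty(X,\nu)$ can account for. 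One small caveat: for general $\mu$ the tail and shift-invariant $\sigma$-algebras on the path space may differ, and it is the latter that realizes the Poisson boundary; for admissible $\mu$ (as assumed here) they coincide by the zero--two law, so your identification is safe in this setting.
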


The $(H, \mu)$-space $(B, \nu_B)$ is called the $(H, \mu)$-\emph{Poisson boundary}. For a construction of the $(H, \mu)$-space $(B, \nu_B)$, we also refer to \cite{BS04, Fu00}. The $(H, \mu)$-space $(B, \nu_B)$ enjoys remarkable ergodic theoretic properties. In that respect, let $(E, \|\cdot\| )$ be any separable continuous isometric Banach $H$-module and $\mathscr C \subset E^*$ any nonempty $H$-invariant weak-$\ast$ compact convex subset. Denote by $\bary : \Prob(\mathscr C) \to \mathscr C$ the $H$-equivariant continuous barycenter map. A point $c \in \mathscr C$ is $\mu$-\emph{stationary} if $\bary({\iota_c}_\ast\mu) = c$ where $\iota_c : H \to \mathscr C : g \mapsto gc$ is the orbit map associated with $c \in \mathscr C$. By Markov--Kakutani's fixed point theorem, the subset $\mathscr C_\mu \subset \mathscr C$ of all $\mu$-stationary points in $\mathscr C$ is not empty. 

The following theorem due to Furstenberg provides the existence (and uniqueness) of boundary maps (see also \cite[Section 2]{BS04}).

\begin{theorem}[Furstenberg, \cite{Fu62b}]
Let $c \in \mathscr C_\mu$ be any $\mu$-stationary point. Then there exists an (essentially) unique $H$-equivariant measurable map $\beta : B \to \mathscr C$ such that 
\begin{equation*}
\bary({\beta}_\ast \nu_B ) = c.
\end{equation*}
We say that $\beta : B \to \mathscr C$ is the $H$-equivariant \emph{boundary map} associated with $c \in \mathscr C_\mu$.
\end{theorem}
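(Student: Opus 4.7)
The plan is to derive both existence and uniqueness from the bijectivity of the Poisson transform, by encoding the stationary point $c$ into a family of bounded $\mu$-harmonic functions indexed by $E$. For each $\phi \in E$, define $F_\phi : H \to \bfC$ by $F_\phi(g) := \langle gc, \phi\rangle$. Since the $H$-action on $\mathscr C \subset E^*$ is weak-$\ast$ continuous and $\mathscr C$ is norm-bounded (by weak-$\ast$ compactness), $F_\phi$ is continuous and uniformly bounded by $\|\phi\|_E \sup_{c' \in \mathscr C}\|c'\|_{E^*}$. Testing the identity $\bary(\iota_{c*}\mu) = c$ against $g^{-1}\phi$ and exploiting linearity of the $H$-action on $E^*$ yields $\int_H F_\phi(gh)\dd\mu(h) = F_\phi(g)$, so $F_\phi \in \Har^\infty(H,\mu)$. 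By Furstenberg's theorem there is a unique $\hat F_\phi \in \rL^\infty(B,\nu_B)$ with $\Psi_\mu(\hat F_\phi) = F_\phi$, and the assignment $\phi \mapsto \hat F_\phi$ is $\bfC$-linear and bounded of norm at most $\sup_{c' \in \mathscr C}\|c'\|_{E^*}$.

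To build $\beta$, use separability of $E$ to fix a countable dense $\bfQ[i]$-linear subspace $E_0 \subset E$ and pick pointwise representatives of each $\hat F_\phi$ for $\phi \in E_0$. Outside a single $\nu_B$-null set (the countable union of the null sets where $\bfQ[i]$-linearity fails), $\phi \mapsto \hat F_\phi(x)$ is a uniformly bounded $\bfQ[i]$-linear functional on $E_0$, and extends by continuity to an element $\beta(x) \in E^*$. To force $\beta(x) \in \mathscr C$ almost everywhere, introduce the support function $s(\phi) := \sup_{c' \in \mathscr C}\Re\langle c', \phi\rangle$ of $\mathscr C$. The bound $\Re F_\phi \leq s(\phi)$ holds pointwise on $H$, so $s(\phi) - \Re F_\phi$ is a nonnegative bounded $\mu$-harmonic function; since $\Psi_\mu^{-1}$ is order-preserving (a standard property of Poisson boundaries), we deduce $\Re \hat F_\phi(x) \leq s(\phi)$ for $\nu_B$-a.e.\ $x$. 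Running $\phi$ over $E_0$ and invoking the Hahn--Banach characterization of $\mathscr C$ as the intersection of the weak-$\ast$ closed half-spaces $\{c' \in E^* : \Re\langle c', \phi\rangle \leq s(\phi)\}$ gives $\beta(x) \in \mathscr C$ almost everywhere; the resulting $\beta : B \to \mathscr C$ is weak-$\ast$ measurable, hence Borel by weak-$\ast$ metrizability of $\mathscr C$.

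The barycenter identity is immediate: $\langle \bary(\beta_*\nu_B), \phi\rangle = \int_B \hat F_\phi \dd\nu_B = \Psi_\mu(\hat F_\phi)(e) = F_\phi(e) = \langle c, \phi\rangle$. For $H$-equivariance, the identity $F_{g\phi} = \lambda(g)F_\phi$ combined with $H$-equivariance and injectivity of $\Psi_\mu$ gives $\hat F_\phi(g^{-1}x) = \hat F_{g\phi}(x)$ for $\nu_B$-a.e.\ $x$, which translates via $\beta$ into $\beta(gx) = g\beta(x)$ almost everywhere (with the null set uniformly in $g$ obtained by invoking a countable dense $H_0 \subset H$ and strong continuity of the $H$-action). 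For uniqueness, any other equivariant $\beta' : B \to \mathscr C$ with $\bary(\beta'_*\nu_B) = c$ satisfies
\begin{equation*}
\Psi_\mu(\langle \beta'(\cdot), \phi\rangle)(g) = \int_B \langle \beta'(gx), \phi\rangle\dd\nu_B(x) = \int_B \langle g\beta'(x), \phi\rangle\dd\nu_B(x) = \langle g\bary(\beta'_*\nu_B), \phi\rangle = F_\phi(g),
\end{equation*}
so injectivity of $\Psi_\mu$ forces $\langle \beta'(\cdot), \phi\rangle = \hat F_\phi = \langle \beta(\cdot), \phi\rangle$ in $\rL^\infty(B)$ for every $\phi \in E_0$, hence $\beta' = \beta$ almost everywhere. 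The main obstacle is the assembly step combined with confining $\beta$ to $\mathscr C$: this requires both the separability of $E$ (so that a single conull set suffices for all test vectors) and the positivity of $\Psi_\mu^{-1}$, which propagates the pointwise bound $\Re F_\phi \leq s(\phi)$ on $H$ into the corresponding pointwise bound $\Re \hat F_\phi \leq s(\phi)$ on $B$.
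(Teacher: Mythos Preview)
The paper does not supply its own proof of this statement; it is quoted as a classical theorem of Furstenberg and left unproved in the preliminaries (with a pointer to \cite{Fu62b} and \cite[Section 2]{BS04}). Your argument is correct and is precisely the standard route: linearize the stationary point into the bounded family $\phi \mapsto F_\phi$ of $\mu$-harmonic functions, invert the Poisson transform coordinatewise, and reassemble using separability of $E$, with the support-function trick and Hahn--Banach separation to force $\beta(x) \in \mathscr C$ almost everywhere. The one place where you lean on an unproved fact is the order-preservation of $\Psi_\mu^{-1}$; this is indeed standard (it follows, for instance, from the martingale realization of the boundary map, in which $\hat F$ is obtained as an almost-sure limit of values of $F$ along random-walk trajectories), but it is worth a one-line justification since $\Psi_\mu$ is not a $\ast$-homomorphism and positivity of the inverse of a positive unital bijection is not automatic in general.
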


\subsection{Semisimple Lie groups}

Let $G$ be any connected semisimple real Lie group with finite center and no nontrivial compact factors. Fix an Iwasawa decomposition $G = KAV$, where $K < G$ is a maximal compact subgroup, $A < G$ is a Cartan subgroup and $V < G$ is a unipotent subgroup. Denote by $L  \coloneqq \mathscr Z_G(A)$ the centralizer of $A$ in $G$ and set $P \coloneqq L V$. Then $P < G$ is a minimal parabolic subgroup. Since $K \curvearrowright G/P$ is transitive, $G/P$ is a compact homogeneous space and there exists a unique $K$-invariant Borel probability measure $\nu_P \in \Prob(G/P)$. The measure class of $\nu_P$ coincides with the unique $G$-invariant measure class on $G/P$. 

\begin{example}
Assume that $G = \SL_d(\bfR)$ for $d \geq 2$. Then we may take $K = \SO_d(\bfR)$, $A < G$ the subgroup of diagonal matrices and $V < G$ the subgroup of strict upper triangular matrices. In that case, $P = AV < G$ is the subgroup of upper triangular matrices. The homogeneous space $G/P$ is the \emph{full flag variety} which consists in all flags $\{0\} \subset W_1 \subset \cdots \subset W_d = \bfR^d$, where $W_i \subset \bfR^d$ is a vector subspace such that $\dim_{\bfR}(W_i) = i$ for every $1 \leq i \leq d$.
\end{example}

Observe for any left $K$-invariant Borel probability measure $\mu_G \in \Prob(G)$, the probability measure $\mu_G \ast \nu_P$ is $K$-invariant on $G/P$ and so $\mu_G \ast \nu_P = \nu_P$, that is, $(G/P, \nu_P)$ is a $(G, \mu_G)$-space. Furstenberg \cite{Fu62a} proved the following fundamental result describing the Poisson boundary of semisimple Lie groups.

\begin{theorem}[Furstenberg, \cite{Fu62a}]\label{thm:poisson-G}
Let $\mu_G \in \Prob(G)$ be any $K$-invariant admissible Borel probability measure. Then $(G/P, \nu_P)$ is the $(G, \mu_G)$-Poisson boundary.
\end{theorem}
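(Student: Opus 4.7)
The plan is to verify bijectivity of the Poisson transform $\Psi_{\mu_G} : \rL^\infty(G/P, \nu_P) \to \Har^\infty(G, \mu_G)$. That $(G/P, \nu_P)$ is a $(G, \mu_G)$-space has already been recorded: $\mu_G \ast \nu_P$ is a $K$-invariant Borel probability measure on $G/P$, so it equals $\nu_P$ by uniqueness. What remains is bijectivity, and I would derive it from the contraction property of the random walk on the flag variety.

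The central input is the contraction lemma: if $(g_n)_{n \geq 1}$ are i.i.d.\ samples from $\mu_G$ on the path space $(\Omega, \mathbb P)$ and $S_n \coloneqq g_1 \cdots g_n$, then the pushforwards $S_n \nu_P \in \Prob(G/P)$ converge almost surely to a random Dirac mass $\delta_{\xi(\omega)}$, where $\xi : \Omega \to G/P$ is a measurable $G$-equivariant map satisfying $\xi_\ast \mathbb P = \nu_P$ by stationarity. The geometric heart of this statement is the Iwasawa decomposition $G = KAV$: writing $S_n = k_n a_n v_n$, admissibility of $\mu_G$ forces the Cartan component $a_n$ to drift to infinity in a non-degenerate direction inside a fixed Weyl chamber; regular elements deep in that chamber contract the open Bruhat cell of $G/P$ onto a single fixed point; left translation by $k_n$ then converts this contraction into honest Dirac convergence of $S_n \nu_P$.

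Granted the contraction lemma, bijectivity of $\Psi_{\mu_G}$ is a martingale argument. For surjectivity, given $F \in \Har^\infty(G, \mu_G)$, the process $n \mapsto F(g S_n)$ is a bounded martingale for every starting point $g \in G$ and converges almost surely to some $F_\infty(g, \omega)$; a complementary ergodicity argument based on amenability of $P$ and ergodicity of the shift on $\Omega$ shows that $F_\infty(g, \cdot)$ is $\sigma(\xi)$-measurable, so $F_\infty(g, \omega) = f(g \cdot \xi(\omega))$ for a single $f \in \rL^\infty(G/P, \nu_P)$ independent of $g$, and taking expectations yields
\begin{equation*}
F(g) = \mathbb E[F_\infty(g, \cdot)] = \int_{G/P} f(g x) \, \mathrm d \nu_P(x) = \Psi_{\mu_G}(f)(g).
\end{equation*}
For injectivity, if $\Psi_{\mu_G}(f) \equiv 0$ then $\int f(S_n x) \, \mathrm d \nu_P(x) = 0$ almost surely for every $n$; passing to the limit against $S_n \nu_P \to \delta_{\xi(\omega)}$ gives $f(\xi(\omega)) = 0$ almost surely, so $f = 0$ in $\rL^\infty(G/P, \nu_P)$ since $\xi_\ast \mathbb P = \nu_P$.

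The hardest part is the contraction lemma together with the identification of the tail $\sigma$-algebra of the walk with $\sigma(\xi)$: both genuinely exploit the hyperbolic structure of the $A$-action on the flag variety, and the latter additionally relies on the amenability of $P$. Everything else is a formal manipulation, and the role of $K$-invariance of $\mu_G$ is precisely to make $\xi_\ast \mathbb P$ agree with the canonical measure $\nu_P$ without any additional bookkeeping.
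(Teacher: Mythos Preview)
The paper does not prove this theorem; it is quoted as a result of Furstenberg \cite{Fu62a} and used as a black box throughout. There is therefore no proof in the paper to compare your proposal against.

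That said, your sketch is essentially the classical Furstenberg argument and is correct in outline. One technical remark on your injectivity step: the phrase ``passing to the limit against $S_n \nu_P \to \delta_{\xi(\omega)}$'' is only immediate for \emph{continuous} $f$, since weak-$\ast$ convergence of probability measures does not test against arbitrary $\rL^\infty$ functions. The clean fix is to note that stationarity gives $\xi \overset{d}{=} S_n \cdot \xi^{(n)}$ with $\xi^{(n)}$ independent of $(g_1,\dots,g_n)$ and distributed as $\nu_P$, so that $\int f(S_n x)\,\mathrm d\nu_P(x) = \mathbb E[f(\xi)\mid g_1,\dots,g_n]$ for every $f \in \rL^\infty(G/P,\nu_P)$; martingale convergence then yields $f(\xi(\omega))$ directly. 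This is a minor technicality rather than a conceptual gap, and you correctly flag the genuinely hard step as the identification of the tail $\sigma$-algebra with $\sigma(\xi)$.
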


For lattices $\Gamma < G$ in connected semisimple \emph{real}  Lie groups as above, Furstenberg \cite{Fu67} also showed that $(G/P, \nu_P)$ can be regarded as the $(\Gamma, \mu_\Gamma)$-Poisson boundary with respect to a well chosen probability measure $\mu_\Gamma \in \Prob(\Gamma)$ (see also \cite{Fu00} and the references therein).

\begin{theorem}[Furstenberg, \cite{Fu67}]\label{thm:poisson-Gamma}
Let $\Gamma < G$ be any lattice. Then there exists a probability measure $\mu_\Gamma \in \Prob(\Gamma)$ with full support such that $(G/P, \nu_P)$ is the $(\Gamma, \mu_\Gamma)$-Poisson boundary.
\end{theorem}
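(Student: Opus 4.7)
The plan is to transfer the $(G,\mu_G)$--Poisson boundary structure on $(G/P,\nu_P)$ given by Theorem \ref{thm:poisson-G} to the lattice $\Gamma$, exploiting the finite covolume hypothesis. The first step is to fix an admissible, $K$--invariant, symmetric probability measure $\mu_G \in \Prob(G)$ with, say, smooth compactly supported density, so that $(G/P,\nu_P)$ is the $(G,\mu_G)$--Poisson boundary. Since $\Gamma < G$ is a lattice, we fix a Borel fundamental domain $D \subset G$ for the left $\Gamma$--action, with finite Haar measure $m(D) < \infty$, and normalize $m \vert_D$ to a probability measure on $D$. This gives a Borel cocycle $c : G \times D \to \Gamma$ defined by the identity $g\cdot d \in c(g,d) \, D$.

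Next, I would define $\mu_\Gamma \in \Prob(\Gamma)$ by pushing forward $\mu_G \otimes m \vert_D$ under the cocycle:
\begin{equation*}
\mu_\Gamma(\{\gamma\}) \;=\; \frac{1}{m(D)} \int_D \mu_G\bigl(\{ g \in G : c(g,d) = \gamma \}\bigr) \, \mathrm{d}m(d).
\end{equation*}
Intuitively, this is the law of the $\Gamma$--component of a single $\mu_G$--step of the random walk on $G$ started from a uniformly chosen point of $D$. The $\mu_\Gamma$--stationarity $\mu_\Gamma \ast \nu_P = \nu_P$ should follow from the $\mu_G$--stationarity of $\nu_P$ together with the fact that the $d$--variable in the cocycle lies in $D \subset G$ and averages out in the action on $G/P$, using a Fubini computation and the $K$--invariance of $\mu_G$ and $\nu_P$. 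To identify the $(\Gamma,\mu_\Gamma)$--Poisson boundary, I would apply the general principle relating harmonic functions on $\Gamma$ to $G$--harmonic functions via induction: any bounded $\mu_\Gamma$--harmonic function $F$ on $\Gamma$ extends canonically to a bounded $\mu_G$--harmonic function $\widetilde F$ on $G$ (by integrating against the cocycle fibers, so $\widetilde F$ is well--defined on cosets $\Gamma\backslash G$ paired with $D$); invoking Theorem \ref{thm:poisson-G} then writes $\widetilde F$ as the Poisson integral of some $f \in \rL^\infty(G/P,\nu_P)$, and restriction to $\Gamma$ exhibits $F$ as the $\mu_\Gamma$--Poisson integral of the same $f$. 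Bijectivity of this Poisson transform then shows that $(G/P,\nu_P)$ is indeed the $(\Gamma,\mu_\Gamma)$--Poisson boundary.

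Finally, to guarantee that $\mu_\Gamma$ has full support, I would replace $\mu_\Gamma$ by the averaged measure $\widehat \mu_\Gamma \coloneqq \sum_{n \geq 1} 2^{-n} \mu_\Gamma^{\ast n}$, which has the same space of bounded harmonic functions and hence the same Poisson boundary. Since $\mu_G$ contains a neighborhood of the identity in $G$, the support of $\mu_\Gamma^{\ast n}$ exhausts $\Gamma$ as $n \to \infty$, so $\supp(\widehat\mu_\Gamma) = \Gamma$. The main technical obstacle I anticipate is step three, namely the rigorous verification that the induction/restriction procedure yields a bijection between bounded $\mu_\Gamma$--harmonic functions on $\Gamma$ and bounded $\mu_G$--harmonic functions on $G$ that transform appropriately under right translation by $D$; this is where the finite covolume, the $K$--invariance of $\mu_G$, and the precise structure of the cocycle $c$ must all be combined carefully.
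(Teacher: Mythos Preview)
The paper does not prove this theorem; it is simply stated with attribution to Furstenberg \cite{Fu67} (with a pointer to \cite{Fu00} for further details), so there is no argument in the paper to compare against.

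Your outline has the right flavor --- Furstenberg's proof is indeed a discretization of the $\mu_G$--random walk through a fundamental domain --- but there is a genuine gap at its core, and it is not merely bookkeeping. With your definition of $\mu_\Gamma$, run the $\mu_G$--walk $x_n$ on $G$ and decompose $x_n = \gamma_n d_n$ with $\gamma_n \in \Gamma$, $d_n \in D$. The increment $\gamma_n^{-1}\gamma_{n+1}$ then depends on the hidden state $d_n$: the $(d_n)$ process is Markov on $D$ (with stationary law $m|_D$), and $(\gamma_n)$ is a random walk in that Markovian environment, \emph{not} an i.i.d.\ walk with step law $\mu_\Gamma$. Your $\mu_\Gamma$ is only the annealed one--step law. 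Consequently there is no reason for a bounded $\mu_\Gamma$--harmonic function on $\Gamma$ to lift, by the fiber--averaging you describe, to a $\mu_G$--harmonic function on $G$; the two notions of harmonicity are governed by different Markov operators. The stationarity claim $\mu_\Gamma * \nu_P = \nu_P$ already runs into this: writing $gd = \gamma d'$ gives $\gamma_*\nu_P = (gd(d')^{-1})_*\nu_P$, and the factor $(d')^{-1}$ does not average away by $K$--invariance since $D \not\subset K$.

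Furstenberg's actual argument (and the later refinements of Lyons--Sullivan and Ballmann--Ledrappier) confronts exactly this issue. One either constructs $\mu_\Gamma$ via balayage/hitting distributions so that the $D$--environment is already absorbed into the increment law, or one proves that the $D$--component is asymptotically irrelevant for the boundary behavior, using contraction properties of the $G$--action on $G/P$. Either way a substantive new idea is needed beyond the cocycle formalism you have set up; you are right that the harmonic--function bijection is the crux, but its resolution is where the real content of the theorem lies.
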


We call a probability measure $\mu_\Gamma \in \Prob (\Gamma)$ as in Theorem \ref{thm:poisson-Gamma} a \emph{Furstenberg measure}. Combining Theorems \ref{thm:poisson-G} and \ref{thm:poisson-Gamma}, we have
\begin{equation*}
\Har^\infty(G, \mu_G) \quad \underset{G\text{-equiv.}}{\cong} \quad \rL^\infty(G/P, \nu_P) \quad \underset{\Gamma\text{-equiv.}}{\cong} \quad \Har^\infty(\Gamma, \mu_\Gamma).
\end{equation*}

A combination of Theorem \ref{thm:poisson-Gamma} and \cite{GM89} implies that for any intermediate parabolic subgroup $P < Q < G$, the map 
\begin{equation}\label{eq:uniqueness}
\delta \circ p_Q : G/P \to \Prob(G/Q) : gP \mapsto \delta_{gQ}
\end{equation}
is the (essentially) unique $\Gamma$-equivariant measurable mapping $\zeta : G/P \to \Prob(G/Q)$.

\subsection{Operator algebras}

A $\rC^*$-\emph{algebra} $A$ is a Banach $\ast$-algebra endowed with a complete  norm $\|\cdot\|$ that satisfies the $\rC^*$-identity: $\|a^* a\| = \|a\|^2$, for every $a \in A$.
Any $\rC^*$-algebra $A$ admits a faithful isometric $\ast$-representation on a Hilbert space $\pi : A \to \mathrm B(\mathscr H)$. After identifying $A$ with $\pi(A)$, we may regard $A \subset \mathrm B(\mathscr H)$ as a concrete $\rC^*$-algebra. Unless stated otherwise, all $\rC^*$-algebras and all linear mappings between $\rC^*$-algebras are always assumed to be unital.

We denote by $\mathfrak S(A)$ the \emph{state space} of $A$. Then $\mathfrak S(A) \subset \Ball(A^*)$ is a weak-$\ast$ compact convex subset. We say that an action $\sigma : H \curvearrowright A$ is \emph{continuous} if the action map $H \times A \to A : (g, a) \mapsto \sigma_g(a)$ is continuous. We then simply say that $A$ is a $H$-$\rC^*$-algebra. The continuous action $H \curvearrowright A$ induces a weak-$\ast$ continuous affine action $H \curvearrowright \mathfrak S(A)$. We may apply the results from Subsection \ref{subsection:poisson} to the $H$-invariant weak-$\ast$ compact convex set $\mathscr C = \mathfrak S(A)$. When $\mu \in \Prob(H)$ is an admissible Borel probability measure, we denote by $\mathfrak S_\mu(A) \subset \mathfrak S(A)$ the nonempty weak-$\ast$ compact convex subset of all $\mu$-stationary states.

\begin{examples} We will consider the following examples of  $\rC^*$-algebras.
\begin{enumerate}
\item For any compact metrizable space $X$, the space $\rC(X)$ of all continuous functions on $X$ endowed with the uniform norm $\|\cdot\|_\infty$ is a commutative $\rC^*$-algebra. Any commutative $\rC^*$-algebra arises this way. We identify the set $\Prob(X)$ of Borel probability measures on $X$ with the state space $\mathfrak S(\rC(X))$ via the continuous mapping $\Prob(X) \to \mathfrak S(\rC(X)) : \nu \mapsto \int_X \cdot \; \mathrm{d}\nu$. Any continuous action  by homeomorphisms $H \curvearrowright X$ naturally gives rise to a continuous action $H \curvearrowright \rC(X)$ in the above sense.
\item For any countable discrete group $\Lambda$ and any unitary representation $\pi : \Lambda \to \mathscr U(\mathscr H_\pi)$, define the $\rC^*$-algebra
\begin{equation*}
\rC^*_\pi(\Lambda) \coloneqq \rC^*(\left\{ \pi(\gamma) \mid \gamma \in \Lambda\right\}) \subset \mathrm B(\mathscr H_\pi)
\end{equation*}
and consider the conjugation action $\Ad(\pi) : \Lambda \curvearrowright \rC^*_\pi(\Lambda)$. The state space $\mathfrak S(\rC^*_\pi(\Lambda))$ is a $\Lambda$-invariant weak-$\ast$ compact convex subset of $\mathscr P(\Lambda)$ via the mapping $\mathfrak S(\rC^*_\pi(\Lambda)) \hookrightarrow \mathscr P(\Lambda) : \psi \mapsto \psi \circ \pi$. If $\pi = \lambda$ is the left regular representation, then $\rC^*_\lambda(\Lambda)$ is the \emph{reduced} group $\rC^*$-algebra. Moreover, the state $\tau_\Lambda : \rC^*_\lambda(\Lambda) \to \bfC: a \mapsto \langle a \delta_e, \delta_e\rangle$ is a faithful trace.
\end{enumerate}
\end{examples}

A \emph{von Neumann algebra} (or $\rW^*$-\emph{algebra}) $M$ is a unital $\rC^*$-algebra which admits a faithful unital $\ast$-representation $\pi : M \to \mathrm B(\mathscr H)$ such that $\pi(M) \subset \mathrm B(\mathscr H)$ is closed with respect to the weak (equivalently strong) operator topology. After identifying $M$ with $\pi(M)$, we may regard $M \subset \mathrm B(\mathscr H)$ as a concrete von Neumann algebra. By von Neumann's bicommutant theorem, a unital $\ast$-subalgebra $M \subset \mathrm B(\mathscr H)$ is a von Neumann algebra if and only if $M$ is equal to its own bicommutant $M\dpr$, that is, $M = M\dpr$.  There is a unique Banach space predual $M_\ast$ such that $M = (M_\ast)^*$. The ultraweak topology on $M$ coincides with the weak-$\ast$ topology arising from the identification $M = (M_\ast)^*$. A linear mapping between von Neumann algebras is \emph{normal} if it is continuous with respect to the ultraweak topology. We say that an action $\sigma : H \curvearrowright M$ is \emph{continuous} if the corresponding action map $H \times M_\ast \to M_\ast : (g, \varphi) \mapsto \varphi \circ \sigma_g^{-1}$ is continuous (see e.g.\ \cite[Proposition X.1.2]{Ta03a}). We then simply say that $M$ is a $H$-von Neumann algebra. The action $H \curvearrowright M$ is \emph{ergodic} if the fixed point von Neumann subalgebra $M^H = \left\{ x \in M \mid \forall g \in H, \sigma_h(x)  = x \right \}$ is trivial.

\begin{examples}\label{ex:vN} We will consider the following examples of von Neumann algebras.
\begin{enumerate}
\item For any standard probability space $(X, \nu)$, the space  $ \rL^\infty(X, \nu)$ of all  $\nu$-equivalence classes of (essentially) bounded measurable functions endowed with the (essential) uniform norm $\|\cdot\|_\infty$ is a commutative von Neumann algebra. Any commutative von Neumann algebra arises this way.  Any nonsingular action $H \curvearrowright (X, \nu)$ naturally gives rise to a continuous action $H \curvearrowright \rL^\infty(X, \nu)$ in the above sense. When no confusion is possible, we simply write $\rL^\infty(X) = \rL^\infty(X, \nu)$.
\item For any countable discrete group $\Lambda$ and any nonsingular action $\Lambda \curvearrowright (X, \nu)$ on a standard probability space, define the \emph{group measure space von Neumann algebra}
\begin{equation*}
\rL(\Lambda \curvearrowright X) \coloneqq \{ f \otimes 1, \pi(\gamma) \mid f \in \rL^\infty(X), \gamma \in \Lambda\}\dpr \subset \mathrm B(\rL^2(X, \nu) \otimes \ell^2(\Lambda))
\end{equation*}
where $\pi : \Lambda \to \mathscr U(\rL^2(X, \nu) \otimes \ell^2(\Lambda))$ is the unitary representation defined by
\begin{equation*}
\forall \xi \in \rL^2(X, \nu), \forall \gamma, h \in \Lambda, \quad \pi(\gamma)(\xi \otimes \delta_h) = \sqrt{\frac{\mathrm{d}(\nu\circ \gamma^{-1})}{\mathrm{d}\nu}} \, \xi \circ \gamma^{-1} \otimes \delta_{\gamma h}.
\end{equation*}
When $(X, \nu)$ is a singleton, the von Neumann algebra $\rL(\Lambda \curvearrowright X)$ coincides with the \emph{group von Neumann algebra} $\rL(\Lambda)$. When the action $\Lambda \curvearrowright (X, \nu)$ is (essentially) free and ergodic, the von Neumann algebra $\rL(\Lambda \curvearrowright X)$ is a factor whose type coincides with the type of the action (see e.g.\ \cite[Theorem XIII.1.7]{Ta03b}).
\end{enumerate}
\end{examples}

A von Neumann algebra $M \subset \mathrm B(\mathscr H)$ is \emph{amenable} if there exists a norm one projection $\rE : \mathrm B(\mathscr H) \to M$. By Connes'\! fundamental result \cite{Co75}, $M$ is amenable if and only if $M$ is \emph{approximately finite dimensional}, that is, there exists an increasing net of finite dimensional subalgebras $M_i \subset M$ such that $\bigvee_{i \in I} M_i = M$.

\section{Dynamical dichotomy for boundary structures}\label{section:structures}

\subsection{Boundary structures}

For any $\rC^*$-algebra $A\subset \mathrm B(\mathscr H)$ and any $n \geq 1$, $\mathrm M_n(A) \coloneqq \mathrm M_n(\bfC) \otimes A \subset \mathrm B(\mathscr H^{\oplus n})$ is naturally  a $\rC^*$-algebra. Let $A, B$ be any $\rC^*$-algebras. A linear map $\Phi : A \to B$ is said to be \emph{unital completely positive} (ucp) if $\Phi$ is unital and if for every $n \geq 1$, the linear map $\Phi^{(n)} : \mathrm M_n(A) \to \mathrm M_n(B) : [a_{ij}]_{ij} \mapsto [\Phi(a_{ij})]_{ij}$ is positive. Any unital $\ast$-homomorphism $\pi : A \to B$ is a ucp map. When $A$ or $B$ is commutative, any unital positive linear map $\Phi : A \to B$ is automatically ucp (see e.g.\ \cite[Theorems 3.9 and 3.11]{Pa02}).

\begin{definition}[\cite{BBHP20}]
Let $\Gamma < G$ be any higher rank lattice and $M$ any $\Gamma$-von Neumann algebra with separable predual. A $\Gamma$-\emph{boundary structure} $\Phi : M \to \rL^\infty(G/P)$ is a $\Gamma$-equivariant faithful normal ucp map. We say that $\Phi$ is \emph{invariant} if $\Phi(M) = \bfC 1$.
\end{definition}

We will simply say that $\Phi : M \to \rL^\infty(G/P)$ is a \emph{boundary structure} instead of a $\Gamma$-boundary structure when it is understood that $M$ is a $\Gamma$-von Neumann algebra. In this survey, we only deal with higher rank lattices in connected semisimple \emph{real}  Lie groups. In that setting, the notion of boundary structure is equivalent to the notion of stationary state. Indeed, fix a Furstenberg measure $\mu_\Gamma \in \Prob(\Gamma)$ so that $(G/P, \nu_P)$ is the $(\Gamma, \mu_\Gamma)$-Poisson boundary (see Theorem \ref{thm:poisson-Gamma}).
\begin{itemize}
\item If $\Phi : M \to \rL^\infty(G/P)$ is a boundary structure, then $\varphi \coloneqq \nu_P \circ \Phi \in M_\ast$ is a faithful normal $\mu_\Gamma$-stationary state on $M$. Moreover, if $\Phi$ is invariant, then $\varphi$ is $\Gamma$-invariant.
\item Conversely, let $\varphi \in M_\ast$ be any faithful normal $\mu_\Gamma$-stationary state on $M$. Define the $\Gamma$-equivariant faithful normal ucp map
$$\Phi : M \to \Har^\infty(\Gamma, \mu_\Gamma) : x \mapsto (\gamma \mapsto \varphi(\gamma^{-1} x)).$$
Since $\Har^\infty(\Gamma, \mu_\Gamma) \cong \rL^\infty(G/P, \nu_P)$ as $\Gamma$-operator systems, we may further regard $\Phi : M \to \rL^\infty(G/P)$ as a boundary structure such that $\varphi = \nu_P \circ \Phi$. If $\varphi$ is $\Gamma$-invariant, then $\Phi$ is invariant.
\end{itemize}

\begin{remark}
The notion of boundary structure was developed in \cite{BBHP20} to replace the notion of stationary state used in \cite{BH19} in order to deal with higher rank lattices in semisimple algebraic groups defined over \emph{arbitrary} local fields.
\end{remark}

It is useful to restrict boundary structures to separable $\rC^*$-subalgebras. Let $M$ be any $\Gamma$-von Neumann algebra with separable predual. A globally $\Gamma$-invariant separable ultraweakly dense $\rC^*$-subalgebra $A \subset M$ is called a \emph{separable model} for the action $\Gamma \curvearrowright M$. If $\Phi : M \to \rL^\infty(G/P)$ is a boundary structure, then $(\nu_P \circ \Phi)|_A \in \mathfrak S_{\mu_\Gamma}(A)$ and the restriction $\Phi|_A : A \to \rL^\infty(G/P)$ gives arise to the $\Gamma$-equivariant boundary map $\beta : G/P \to \mathfrak S(A) : b \mapsto \beta_b$ such that $\bary(\beta_\ast \nu_P) = (\nu_P \circ \Phi)|_A$, where
\begin{equation*}
\forall a \in A, \quad \Phi(a)(b) = \beta_b(a).
\end{equation*}

We present several examples of boundary structures.

\begin{example}[Boundary structure arising from unitary representations]\label{example:boundary-rep}
Let $\pi : \Gamma \to \mathscr U(\mathscr H_\pi)$ be any unitary representation and set $A \coloneqq \rC^*_\pi(\Gamma)$. Choose an extremal $\mu_\Gamma$-stationary state $\varphi \in \mathfrak S_{\mu_\Gamma}(A)$ and consider the GNS triple $(\pi_\varphi, \mathscr H_\varphi, \xi_\varphi)$. Denote by $\beta : G/P \to \mathfrak S(A) : b \mapsto \beta_b$ the $\Gamma$-equivariant boundary map associated with $\varphi \in \mathfrak S_{\mu_\Gamma}(A)$. By duality, we may consider the $\Gamma$-equivariant ucp map $\Phi : A \to \rL^\infty(G/P) : a \mapsto( b \mapsto \beta_b(a))$ which satisfies $\nu_P \circ \Phi = \varphi$. Set $M \coloneqq \pi_\varphi(A)\dpr = (\pi_\varphi \circ \pi)(\Gamma)\dpr$. By extremality, the conjugation action $\Ad (\pi_\varphi \circ \pi) : \Gamma \curvearrowright M$ is ergodic. Moreover, the $\Gamma$-equivariant ucp map
\begin{equation*}
 \pi_\varphi(A) \to \rL^\infty(G/P) : \pi_\varphi(a) \mapsto \Phi(a)
\end{equation*}
is well defined and extends to a boundary structure $\Phi : M \to \rL^\infty(G/P)$. We refer to \cite[Proof of Theorem A]{BH19} for further details.
\end{example}

\begin{example}[Boundary structure arising from characters]\label{example:boundary-characters} Let $\varphi : \Gamma \to \bfC$ be any extremal character. Simply denote by $(\pi, \mathscr H, \xi)$ the GNS triple associated with $\varphi \in \Char(\Gamma)$. Denote by $J : \mathscr H \to \mathscr H : \pi(\gamma)\xi \mapsto \pi(\gamma)^*\xi$ the canonical conjugation. Following \cite{Pe14}, define the \emph{noncommutative Poisson boundary} $\mathscr B$ as the von Neumann algebra of all $\nu_P$-equivalence classes of (essentially) bounded measurable functions $f : G/P \to \mathrm B(\mathscr H)$ satisfying $f(\gamma b) = \Ad(J \pi(\gamma) J)(f(b))$ for every $\gamma \in \Gamma$ and almost every $b \in G/P$. Observe that $\bfC 1 \otimes \pi(\Gamma)\dpr \subset \mathscr B$. Since $P$ is amenable, $\mathscr B$ is an amenable von Neumann algebra. By extremality, the conjugation action $\Ad(\pi) : \Gamma \curvearrowright \mathscr B$ is ergodic. Moreover, 
\begin{equation*}
\Phi : \mathscr B \to \rL^\infty(G/P) : f \mapsto (b \mapsto \langle f(b)\xi, \xi\rangle)
\end{equation*}
is a boundary structure. When $\varphi = \delta_e$ is the regular character, the noncommutative Poisson boundary $\mathscr B$ coincides with the group measure space von Neumann algebra $\rL(\Gamma \curvearrowright G/P)$ and the boundary structure $\Phi : \rL(\Gamma \curvearrowright G/P) \to \rL^\infty(G/P)$ is the canonical $\Gamma$-equivariant conditional expectation. We refer to \cite[Proof of Theorem C]{BH19} for further details.
\end{example}

\begin{example}[Boundary structure arising from topological dynamics]\label{example:boundary-top}
Let $\Gamma \curvearrowright X$ be any minimal action on a compact metrizable space. Choose an extremal $\mu_\Gamma$-stationary Borel probability measure $\nu \in \Prob_{\mu_\Gamma}(X)$. By minimality, we have $\supp(\nu) = X$. Denote by $\beta : G/P \to \Prob(X) : b \mapsto \beta_b$ the $\Gamma$-equivariant boundary map associated with $\nu \in \Prob_{\mu_\Gamma}(X)$. By duality, we may consider the $\Gamma$-equivariant ucp map $\Phi : \rC(X) \to \rL^\infty(G/P) : f \mapsto( b \mapsto \beta_b(f))$ which satisfies $\nu_P \circ \Phi = \nu$. By extremality, the nonsingular action $\Gamma \curvearrowright (X, \nu)$ is ergodic. Moreover, $\Phi : \rC(X) \to \rL^\infty(G/P)$
extends to  a boundary structure $\Phi : \rL^\infty(X, \nu) \to \rL^\infty(G/P)$.
\end{example}

The notion of boundary structure is well adapted to induction. Indeed, let $\Phi : M \to \rL^\infty(G/P)$ be any $\Gamma$-boundary structure. Denote by $\widehat M \coloneqq \Ind_\Gamma^G(M) \cong \rL^\infty(G/\Gamma) \ovt M$ the induced $G$-von Neumann algebra. Since $G/P$ is a $G$-space, we have $\Ind_\Gamma^G(\rL^\infty(G/P)) \cong \rL^\infty(G/\Gamma) \ovt \rL^\infty(G/P)$, where $G \curvearrowright G/\Gamma \times G/P$ acts diagonally. Denote by $\nu_{\Gamma} \in \Prob(G/\Gamma)$ the unique $G$-invariant Borel probability measure. Then the map $\widehat \Phi \coloneqq \nu_\Gamma \otimes \Phi : \widehat M \to \rL^\infty(G/P)$ is a $G$-equivariant faithful normal ucp map. We then refer to $\widehat \Phi$ as the \emph{induced} $G$-\emph{boundary structure}. Note that $\Phi$ is invariant if and only if $\widehat \Phi$ is invariant. 

This framework provides a more conceptual approach to the \emph{stationary induction} considered in \cite[Section 4]{BH19}. Let $\mu_G \in \Prob(G)$ be any $K$-invariant admissible Borel probability measure. Let $\varphi $ be any faithful normal $\mu_\Gamma$-stationary state on $M$ and define the corresponding $\Gamma$-boundary structure $\Phi : M \to \rL^\infty(G/P)$ such that $\nu_P \circ \Phi = \varphi$. Consider the induced $G$-boundary structure $\widehat \Phi : \widehat M \to \rL^\infty(G/P)$. Then $\widehat \varphi \coloneqq \nu_P \circ \widehat \Phi$ is a faithful normal $\mu_G$-stationary state on $\widehat M$. Moreover, $\varphi$ is $\Gamma$-invariant if and only if $\widehat \varphi$ is $G$-invariant.

\subsection{The dynamical dichotomy theorem for boundary structures}

Let $A$ be any separable $\rC^*$-algebra. We say that $\phi, \psi \in \mathfrak S(A)$ are \emph{pairwise singular} and write $\phi \perp \psi$ if there exists a sequence $(a_k)_k$ in $A$ such that $0 \leq a_k \leq 1$ for every $k \in \bfN$ and for which $\lim_k \phi(a_k) = 0 = \lim_k \psi(1 - a_k)$. This notion naturally extends the notion of pairwise singularity of Borel probability measures on metrizable compact spaces. Observe that for any unital $\rC^*$-subalgebra $B \subset A$ and any states $\phi, \psi \in \mathfrak S(A)$, if $\phi|_B \perp \psi|_B$, then $\phi \perp \psi$. We introduce the following terminology.

\begin{definition}[\cite{BBHP20}]
Let $\Gamma < G$ be any higher rank lattice. Let $M$ be any $\Gamma$-von Neumann algebra with separable predual and $\Phi : M \to \rL^\infty(G/P)$ any boundary structure. We say that $\Phi$ is \emph{singular} if there exists a separable model $A \subset M$ for the action $\Gamma \curvearrowright M$ such that the corresponding $\Gamma$-equivariant boundary map $\beta : G/P \to \mathfrak S(A) : b \mapsto \beta_b$ satisfies the following property:
\begin{equation}\label{eq:singularity}
\text{For every } \gamma \in \Gamma \setminus \mathscr Z(\Gamma), \text{ for almost every } b \in G/P, \quad \beta_{\gamma b} \perp \beta_b.
\end{equation}
\end{definition}

The notion of singularity for boundary structures is quite robust. If $\Phi : M \to \rL^\infty(G/P)$ is singular, then for {\em every} separable model $A \subset M$, the corresponding  $\Gamma$-equivariant boundary map $\beta : G/P \to \mathfrak S(A) : b \mapsto \beta_b$ satisfies \eqref{eq:singularity} (see \cite[Proposition 4.10]{BBHP20}). This implies the following useful fact. If $M_0 \subset M$ is a $\Gamma$-invariant von Neumann subalgebra and if the restriction $\Phi |_{M_0} : M_0 \to \rL^\infty(G/P)$ is singular, then $\Phi$ is singular as well.

In case the action $\Gamma \curvearrowright M$ is given by conjugation, singular boundary structures enjoy the following useful vanishing property.

\begin{proposition}[\cite{BBHP20}]\label{prop:dynamical-singular}
Let $M$ be any von Neumann algebra with separable predual and $\pi : \Gamma \to \mathscr U(M)$ any unitary representation. Consider the conjugation action $\Ad(\pi) : \Gamma \curvearrowright M$. Let $\Phi : M \to \rL^\infty(G/P)$ be any singular boundary structure. Then for every $\gamma \in \Gamma \setminus \mathscr Z(\Gamma)$, we have $\Phi(\pi(\gamma)) = 0$.
\end{proposition}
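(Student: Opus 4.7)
\medskip

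\textbf{Plan.} My plan is to reinterpret the value $\Phi(\pi(\gamma))(b)$ as an inner product in the GNS space of $\beta_b$ and then exploit singularity to force that inner product to vanish. Concretely, I would write $\alpha_\gamma := \Ad(\pi(\gamma))$ and proceed in four steps.

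\smallskip

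\textbf{Step 1: Choose a separable model containing $\pi(\Gamma)$.} Since $M$ has separable predual, a separable model $A_0 \subset M$ for $\Gamma \curvearrowright M$ exists. I enlarge it to the $\rC^*$-subalgebra $A \subset M$ generated by $A_0 \cup \pi(\Gamma)$. Because $\pi(\Gamma)$ is a countable set already invariant under conjugation by itself, $A$ is still separable, $\Gamma$-invariant and ultraweakly dense in $M$, hence a separable model for $\Gamma \curvearrowright M$ which moreover contains every $\pi(\gamma)$. By the robustness of singularity recalled before the statement, the boundary map $\beta : G/P \to \mathfrak S(A) : b \mapsto \beta_b$ attached to $A$ still satisfies $\beta_{\gamma b} \perp \beta_b$ for every $\gamma \in \Gamma \setminus \mathscr Z(\Gamma)$ and almost every $b \in G/P$.

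\smallskip

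\textbf{Step 2: Unwind equivariance in the GNS picture.} For $a \in A$, equivariance of $\Phi$ gives $\beta_b(\alpha_\gamma(a)) = \Phi(\alpha_\gamma(a))(b) = \Phi(a)(\gamma^{-1} b) = \beta_{\gamma^{-1} b}(a)$, so that $\beta_{\gamma b} = \beta_b \circ \alpha_{\gamma^{-1}}$. Letting $(\rho_b, \mathscr H_b, \xi_b)$ denote the GNS triple of $\beta_b$ and setting $\eta_b := \rho_b(\pi(\gamma))\xi_b$, which is a unit vector since $\pi(\gamma)$ is unitary, we compute
\[
\beta_{\gamma b}(a) \;=\; \beta_b\bigl(\pi(\gamma)^{-1} a\, \pi(\gamma)\bigr) \;=\; \langle \rho_b(a)\eta_b,\eta_b\rangle.
\]
Thus $\beta_b$ and $\beta_{\gamma b}$ are the vector states of $\xi_b$ and $\eta_b$ in the \emph{same} representation $\rho_b$, and the value we want to annihilate is precisely the inner product
\[
\Phi(\pi(\gamma))(b) \;=\; \beta_b(\pi(\gamma)) \;=\; \langle \eta_b, \xi_b\rangle.
\]

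\smallskip

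\textbf{Step 3: Hilbert-space orthogonality from singularity.} The key lemma I would prove is that if two vector states $\omega_\xi, \omega_\eta$ of a representation $\rho : A \to \mathrm B(\mathscr H)$ of a $\rC^*$-algebra are singular as states on $A$, then $\xi \perp \eta$. Given a witness $(a_k)_k \subset A$ with $0 \leq a_k \leq 1$, $\omega_\xi(a_k)\to 0$ and $\omega_\eta(1-a_k)\to 0$, I set $b_k := \rho(a_k)^{1/2}$ and $c_k := \rho(1-a_k)^{1/2}$, both self-adjoint of norm at most $1$, with $b_k^2 + c_k^2 = 1$, $\|b_k\xi\|\to 0$ and $\|c_k\eta\|\to 0$. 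Two applications of Cauchy--Schwarz give
\[
|\langle \xi,\eta\rangle| \;\leq\; |\langle b_k\xi,b_k\eta\rangle| + |\langle c_k\xi,c_k\eta\rangle| \;\leq\; \|b_k\xi\| + \|c_k\eta\| \;\longrightarrow\; 0,
\]
so $\langle \xi,\eta\rangle = 0$.

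\smallskip

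\textbf{Step 4: Conclude.} Applying the lemma to $\xi_b$ and $\eta_b$ for almost every $b$ yields $\Phi(\pi(\gamma))(b) = \langle \eta_b,\xi_b\rangle = 0$, hence $\Phi(\pi(\gamma)) = 0$ in $\rL^\infty(G/P)$. The only step requiring any real thought is the Hilbert-space lemma in Step 3; the rest is a matter of packaging equivariance correctly and invoking the robustness of the singularity hypothesis so that the separable model can be chosen to contain $\pi(\Gamma)$. Morally, the content of the argument is that the global dynamical dissynchronization between $\beta_b$ and $\beta_{\gamma b}$ at the level of states upgrades to an orthogonality of GNS vectors, and $\Phi(\pi(\gamma))(b)$ is exactly the inner product that this orthogonality kills.
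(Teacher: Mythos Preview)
Your proof is correct and is essentially the paper's argument recast in the GNS picture: the paper splits $\beta_b(\pi(\gamma)) = \beta_b((1-a_k)\pi(\gamma)) + \beta_b(a_k\pi(\gamma))$ and bounds each term by Cauchy--Schwarz at the state level, which is exactly your decomposition $\langle \xi_b,\eta_b\rangle = \langle c_k^2\xi_b,\eta_b\rangle + \langle b_k^2\xi_b,\eta_b\rangle$ read through the GNS representation. Your extraction of the clean Hilbert-space lemma ``singular vector states have orthogonal vectors'' is a nice modular repackaging, but the underlying computation is identical.
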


\begin{proof}
The proof is similar to \cite[Lemma 2.2]{HK17}. We may choose a separable model $A \subset M$ for the conjugation action $\Ad(\pi) : \Gamma \curvearrowright M$ such that $\pi(\Gamma) \subset A$. Denote by $\beta : G/P \to \mathfrak S(A)$ the $\Gamma$-equivariant boundary map arising from $\Phi|_A$. Let $\gamma \in \Gamma \setminus \mathscr Z(\Gamma)$ be any element. Choose a conull measurable subset $Y \subset G/P$ such that for every $b \in Y$, we have $\beta_{\gamma b} = \gamma \beta_b= \beta_b \circ \Ad(\pi(\gamma)^*)$ and $\beta_{\gamma b} \perp \beta_b$. Let $b \in Y$ be any point and choose a sequence $(a_k)_k$ in $A$ such that $0 \leq a_k \leq 1$ for every $k \in \bfN$ and for which $\lim_k \beta_{\gamma b}(a_k) = 0 = \lim_k \beta_b(1 - a_k)$. Then Cauchy--Schwarz inequality implies that
\begin{align*}
|\beta_b((1 - a_k)\pi(\gamma) )| &= |\beta_b((1 - a_k)^{1/2} \cdot (1 - a_k)^{1/2}\pi(\gamma) )| \\
&\leq \beta_b(1 - a_k)^{1/2} \to 0.
\end{align*}
Likewise, Cauchy--Schwarz inequality implies that
\begin{align*}
|\beta_{b}(a_k\pi(\gamma) )| &= |\beta_{\gamma b}(\pi(\gamma) a_k^{1/2} \cdot a_k^{1/2})| \\
&\leq \beta_{\gamma b}(a_k)^{1/2} \to 0.
\end{align*}
Then $\beta_b(\pi(\gamma)) = \beta_b((1 - a_k)\pi(\gamma) ) + \beta_b(a_k\pi(\gamma) ) \to 0$ and so $\beta_b(\pi(\gamma)) = 0$. Since this holds true for every $b \in Y$, it follows that $\Phi(\pi(\gamma)) = 0$.
\end{proof}

As we mentioned in the Introduction, the following dynamical dichotomy theorem \emph{invariant vs.\ singular} for boundary structures is the key novelty in our operator algebraic framework.

\begin{theorem}[\cite{BH19, BBHP20}]\label{thm:dynamical}
Let $M$ be any ergodic $\Gamma$-von Neumann algebra with separable predual and $\Phi : M \to \rL^\infty(G/P)$ any boundary structure. Then $\Phi$ is either invariant or singular.
\end{theorem}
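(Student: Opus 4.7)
The plan is to split the argument along whether $G$ is simple or is a nontrivial direct product of simple factors, mirroring the treatments in \cite{BH19} and \cite{BBHP20}. In the simple case, Theorem \ref{thm:NCNZ} provides such a sharp structural description that the dichotomy follows almost for free; in the non-simple case, there is no Nevo--Zimmer theorem and the product structure of $G$ combined with the irreducibility of $\Gamma$ must be used directly.

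First I would treat the case where $G$ is simple. If $\Phi$ is invariant there is nothing to prove, so assume otherwise. By Theorem \ref{thm:NCNZ} there exist a proper parabolic subgroup $P < Q < G$ and a $\Gamma$-equivariant unital normal embedding $\iota : \rL^\infty(G/Q) \hookrightarrow M$ with $\Phi \circ \iota = p_Q^*$. Set $M_0 \coloneqq \iota(\rL^\infty(G/Q))$. By the robustness of singularity stated after the definition (if the restriction of a boundary structure to a $\Gamma$-invariant von Neumann subalgebra is singular, then so is the boundary structure itself), it is enough to show that $\Phi|_{M_0}$ is singular. Take the separable model $A \coloneqq \rC(G/Q) \subset M_0$. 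The $\Gamma$-equivariant boundary map $\beta : G/P \to \mathfrak S(A) = \Prob(G/Q)$ arising from $p_Q^*|_A$ must, by the uniqueness statement \eqref{eq:uniqueness}, coincide almost everywhere with $b \mapsto \delta_{p_Q(b)}$. Since $Q$ is a proper parabolic subgroup, every noncentral element $\gamma \in \Gamma \setminus \mathscr Z(\Gamma)$ acts essentially freely on $G/Q$, so that for almost every $b \in G/P$ we have $p_Q(\gamma b) = \gamma p_Q(b) \neq p_Q(b)$, whence $\beta_{\gamma b} = \delta_{\gamma p_Q(b)} \perp \delta_{p_Q(b)} = \beta_b$, which is exactly \eqref{eq:singularity}.

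For the non-simple case, the plan is to proceed by induction on the number of simple factors of $G$. Write $G = G_1 \times G_2$ with both $G_i$ nontrivial semisimple with finite center and no compact factor, factor the minimal parabolic as $P = P_1 \times P_2$, and pass to the induced $G$-boundary structure $\widehat \Phi : \widehat M \to \rL^\infty(G/P)$ of Section \ref{section:structures}, which is invariant (resp.\ singular) if and only if $\Phi$ is. I would then analyze $\widehat \Phi$ one factor at a time by restricting codomains to the subsystem $\rL^\infty(G_i/P_i) \subset \rL^\infty(G/P)$ and trying to establish a relative version of the dichotomy along the $G_i$-direction. Finally I would use the irreducibility of $\Gamma < G$, which guarantees that the projection of $\Gamma$ to each factor is dense, to propagate nontrivial behavior from one factor to the other and reassemble a global dichotomy between invariance and singularity for $\widehat \Phi$.

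The main obstacle is this non-simple case, and in particular its sub-case where $G$ has simple factors of real rank one. On such a factor no Nevo--Zimmer style structure theorem is available and one cannot simply locate a copy of a factor flag variety inside $M$ to run the simple-case argument. Instead, one has to extract the pairwise singularity of the boundary maps $\beta_b$ from the combined product dynamics, leveraging the density of the projection of $\Gamma$ to each factor to compensate for the lack of rigidity on an individual factor. Setting up the right relative notion of singularity along one factor, and then controlling the null sets carefully as one patches together the pieces so that \eqref{eq:singularity} holds for \emph{every} $\gamma \in \Gamma \setminus \mathscr Z(\Gamma)$ and for almost every $b \in G/P$, is what I would expect to be the principal technical hurdle.
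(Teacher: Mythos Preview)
Your treatment of the simple case is correct and essentially identical to the paper's: invoke Theorem \ref{thm:NCNZ}, restrict to $M_0 = \iota(\rL^\infty(G/Q))$, identify the boundary map with $\delta \circ p_Q$, and use the essential freeness of $\Gamma \setminus \mathscr Z(\Gamma)$ on $G/Q$ together with the robustness of singularity under passage to $\Gamma$-invariant subalgebras.

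For the non-simple case your outline starts in the right place (induce to $\widehat\Phi$ and exploit the product decomposition $G = G_1 \times G_2$, $P = P_1 \times P_2$), but the proposed mechanism --- a ``relative dichotomy along the $G_i$-direction'' proved by induction on the number of simple factors, then patched together via irreducibility --- does not match the actual argument and has a real gap. Induction on simple factors cannot close: when $G_1$ has real rank one, Theorem \ref{thm:NCNZ} is unavailable for $G_1$, so you never get a dichotomy ``in the $G_1$-direction'' to feed back in. Also, ``restricting the codomain to $\rL^\infty(G_i/P_i)$'' is not an operation you can perform on a ucp map; you first need to isolate a \emph{domain} on which the image automatically lands there.

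The paper's route is more concrete and bypasses any inductive appeal to the simple case. After inducing, one passes to the $G_2$-fixed subalgebra $\widehat M^{G_2}$, shows $\widehat\Phi(\widehat M^{G_2}) \subset \rL^\infty(G_1/P_1)$ and is still nontrivial, and identifies $\widehat M^{G_2}$ $\Gamma$-equivariantly with the subalgebra $M_1 \subset M$ of \emph{$G_1$-continuous} elements (those $x$ for which $\gamma \mapsto \sigma_\gamma(x)$ extends continuously from $\Gamma$ to $G_1$). This yields a genuine $G_1$-boundary structure $\Phi|_{M_1} : M_1 \to \rL^\infty(G_1/P_1)$ with a \emph{continuous} $G_1$-action. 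Singularity is then proved directly, with no Nevo--Zimmer input: using transitivity of $G_1 \curvearrowright G_1/P_1$ one obtains a \emph{continuous} boundary map $\beta$ and an extremal $P_1$-invariant state $\psi = \beta_{P_1}$; one shows that for every $g \in G_1$ either $g\psi \perp \psi$ or $g\psi = \psi$, so the stabilizer $Q_1 = \Stab_{G_1}(\psi)$ is a proper parabolic containing $P_1$. Essential freeness of noncentral elements on $G_1/Q_1$, together with $p_1(\Gamma \setminus \mathscr Z(\Gamma)) \subset G_1 \setminus \mathscr Z(G_1)$, then gives singularity of $\Phi|_{M_1}$ and hence of $\Phi$. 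The two ideas you are missing are the passage to $M_1 \cong \widehat M^{G_2}$ and this extremal-state/stabilizer argument replacing Theorem \ref{thm:NCNZ} on the factor.
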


The proof of Theorem \ref{thm:dynamical} depends heavily upon whether the ambient connected semisimple real Lie group $G$ is simple or not.

\textbf{In case $G$ is simple}, let us explain why Theorem \ref{thm:NCNZ} implies Theorem \ref{thm:dynamical}. Let $\Phi : M \to \rL^\infty(G/P)$ be any non-invariant boundary structure. By Theorem \ref{thm:NCNZ}, there exist a proper parabolic subgroup $P < Q < G$ and a $\Gamma$-equivariant unital normal embedding $\iota : \rL^\infty(G/Q) \hookrightarrow M$ such that $\Phi \circ \iota = p_Q^\ast$. Set $M_0 \coloneqq \iota(\rL^\infty(G/Q)) \subset M$. Then $A \coloneqq \iota(\rC(G/Q)) \subset M_0$ is a separable model for the action $\Gamma \curvearrowright M_0$ and the $\Gamma$-equivariant boundary map corresponding to $\Phi|_A$ is exactly $\delta \circ p_Q : G/P \to \Prob(G/Q) : gP \mapsto \delta_{gQ}$. Since any element $\gamma \in \Gamma \setminus \mathscr Z(\Gamma)$ acts (essentially) freely on $G/Q$ (see e.g.\! \cite[Lemma 6.2]{BH19}), it follows that the restriction $\Phi|_{M_0} : M_0 \to \rL^\infty(G/P)$ is singular. Thus, $\Phi$ is singular.

\textbf{In case $G$ is not simple}, we have to use a different approach. Following \cite{BBHP20}, we outline the main steps of the proof of Theorem \ref{thm:dynamical} in the particular case where $G = G_1 \times G_2$ with $G_1$ and $G_2$ noncompact connected simple Lie groups with finite center. This particular case already contains all the main conceptual difficulties. Let $i = 1, 2$. Denote by $p_i : G \to G_i$ the canonical factor map. Denote by $P_i < G_i$ a minimal parabolic subgroup and set $P = P_1 \times P_2 <G$. By Theorem \ref{thm:poisson-G}, $(G_i, \nu_{P_i})$ is the $(G_i, \mu_i)$-Poisson boundary with respect to appropriate Borel probability measures $\mu_i \in \Prob(G_i)$ and $\nu_{P_i} \in \Prob(G_i/P_i)$. Let $\Phi : M \to \rL^\infty(G/P)$ be any non-invariant boundary structure. Our goal is to show that $\Phi$ is singular.

\textbf{Step 1: Induction.} Denote by $\widehat \Phi : \widehat M \to \rL^\infty(G/P)$ the induced $G$-boundary structure. Since $\Phi$ is not invariant, $\widehat \Phi$ is not invariant either, that is, $\widehat \Phi(\widehat M) \neq \bfC 1$.

\textbf{Step 2: Reduction to the von Neumann algebra of $G_1$-continuous elements.} Exploiting the product structure $G = G_1 \times G_2$ and up to permuting the indices, we show that the restriction of $\widehat \Phi$ to the $G_2$-fixed point von Neumann subalgebra $\widehat M^{G_2} \subset \widehat M$ still satisfies $\Phi(\widehat M^{G_2}) \neq \bfC 1$ and moreover $\widehat \Phi(\widehat M^{G_2}) \subset \rL^\infty(G_1/P_1)$. Exploiting that $\Gamma < G_1 \times G_2$ is irreducible and that $(G_i, \nu_{P_i})$ is the $(G_i, \mu_i)$-Poisson boundary, $i = 1, 2$, we show that the $G_1$-von Neumann algebra $\widehat M^{G_2}$ is $\Gamma$-isomorphic to the $\Gamma$-von Neumann subalgebra $M_1 \subset M$ of all elements $x \in M$ for which the action map $\Gamma \to M : \gamma \mapsto \sigma_\gamma(x)$ extends continuously to $G_1$. We say that $M_1 \subset M$ is the von Neumann subalgebra of $G_1$-\emph{continuous elements}. Moreover, under the identification $\widehat M^{G_2} = M_1$, we naturally have the identification $\widehat \Phi|_{\widehat M^{G_2}} = \Phi|_{M_1}$. Then $M_1$ is a $G_1$-ergodic von Neumann algebra and $\Phi|_{M_1} : M_1 \to \rL^\infty(G_1/P_1)$ is a $G_1$-boundary structure such that $\Phi(M_1) \neq \bfC 1$. Since $\Phi|_{M_1}$ is the restriction to $M_1$ of the $\Gamma$-boundary structure $\Phi$, it suffices to show that $\Phi|_{M_1}$ is singular.

\textbf{Step 3: Singularity of $\Phi|_{M_1}$.} We may choose a separable model $A_1 \subset M_1$ for the continuous action $G_1 \curvearrowright M_1$. Since $G_1 \curvearrowright G_1/P_1$ is transitive, $\Phi|_{A_1} : A_1 \to \rL^\infty(G_1/P_1)$ gives rise to a $G_1$-equivariant continuous boundary map $\beta : G_1/P_1 \to \mathfrak S(A_1) : b \mapsto \beta_b$. Since the action $G_1 \curvearrowright M_1$ is ergodic, the $P_1$-invariant state $\psi \coloneqq \beta_{P_1} \in \mathfrak S(A_1)$ is extremal among $P_1$-invariant states. We then show that for every $g \in G_1$, either $g\psi \perp \psi$ or $g \psi = \psi$. Since $\psi$ is not $G_1$-invariant on $A_1$, the stabilizer $Q_1 = \Stab_{G_1}(\psi)$ is a proper parabolic subgroup such that $P_1 < Q_1$. Since any element $g \in G_1 \setminus \mathscr Z(G_1)$ acts (essentially) freely on $G_1/Q_1$ and since $p_1(\Gamma \setminus \mathscr Z(\Gamma)) \subset G_1 \setminus \mathscr Z(G_1)$, it follows that the restriction $\Phi|_{M_1}$ is singular. Thus, $\Phi$ is singular.

\subsection{Outline of the proof of Theorem \ref{thm:NCNZ}}

Following \cite{BH19}, we outline the main steps of the proof of Theorem \ref{thm:NCNZ}. We may assume that $G$ is a connected simple real Lie group with trivial center and real rank $\rk_{\bfR}(G) \geq 2$. Recall that $G$ admits an Iwasawa decomposition $G = KAV$ where $K < G$ is a maximal compact subgroup, $A < G$ is a Cartan subgroup and $V < G$ is a unipotent subgroup. Set $P = LV$ where $L = \mathscr Z_G(A)$ and note that $P < G$ is a minimal parabolic subgroup. Likewise, write $\overline P = L \overline V$ for the opposite minimal parabolic subgroup. Note that $G = \langle P, \overline V\rangle$ and the map $\overline V \to G/P : \overline v \mapsto \overline v P$ defines a measurable isomorphism. Let $\Phi : M \to \rL^\infty(G/P)$ be any non-invariant boundary structure. Our goal is to show that there exist a proper parabolic subgroup $P < Q < G$ and a $\Gamma$-equivariant unital normal embedding $\iota : \rL^\infty(G/Q) \hookrightarrow M$.

\textbf{Step 1: Induction.} Exactly as in the proof of Theorem \ref{thm:dynamical}, denote by $\widehat \Phi : \widehat M \to \rL^\infty(G/P)$ the induced $G$-boundary structure which satisfies $\widehat \Phi(\widehat M) \neq \bfC 1$. We may choose a separable model $A \subset \widehat M$ for the continuous action $G \curvearrowright \widehat M$. Since $G \curvearrowright G/P$ is transitive, $\widehat \Phi|_{A} : A \to \rL^\infty(G/P)$ gives rise to a $G$-equivariant continuous boundary map $\beta : G/P \to \mathfrak S(A) : b \mapsto \beta_b$. Denote by $\psi \coloneqq \beta_P \in \mathfrak S(A)$ the corresponding $P$-invariant state, consider the GNS representation $\pi_\psi :A \to \mathrm B( \mathscr H_\psi)$ and set $N \coloneqq \pi_\psi(A)\dpr$. Then $N$ is a $P$-von Neumann algebra and we may consider the induced $G$-von Neumann algebra $\Ind_P^G(N)$. Moreover, we may regard $\widehat M \subset \Ind_P^G(N)$ as a $G$-von Neumann subalgebra. We point out that the normal state $\psi \in N_\ast$ need not be faithful on $N$. Since we only give a sketch of the proof, we will assume that $\psi$ is faithful on $N$. We refer to \cite[Theorem 5.1]{BH19} for the general proof.

\textbf{Step 2: Construction of a well behaved von Neumann subalgebra.} In this step, we build upon Nevo--Zimmer's proof of \cite[Theorem 1]{NZ00}. Since $\widehat \Phi(\widehat M) \neq \bfC 1$, the $P$-invariant state $\psi \in \mathfrak S(A)$ is not $G$-invariant whence not $\overline V$-invariant. Using the real rank assumption $\rk_{\bfR}(G) \geq 2$, there is a strict intermediate parabolic subgroup $P < P_0 < G$ with Levi decomposition $P_0 = L_0 V_0$, where $L_0 = \mathscr Z_G(A_0)$, $A_0 < A$, and $V_0 < V$, such that $\psi \in \mathfrak S(A)$ is not $\overline V_0$-invariant. Choose a nontrivial element $s \in A_0$ so that $s$ (resp.\! $s^{-1}$) acts by conjugation as a contracting automorphism of $V_0$ (resp.\! $\overline V_0$). By Mautner phenomenon, any $s$-fixed element in $N$ is necessarily $V_0$-fixed. Since the subgroup $\langle s, V_0\rangle$ is normal in $P$, it follows that $N^s \subset N$ is a $P$-invariant von Neumann subalgebra. Using these assumptions, we show that $\mathscr M_0 = \widehat M \cap \Ind_P^G(N^s)$ is a $G$-von Neumann subalgebra such that $\widehat \Phi(\mathscr M_0) \neq \bfC 1$.

\textbf{Step 3: From noncommutative to commutative.} We reached the point where we can no longer rely on Nevo--Zimmer's argument \cite{NZ00}. Indeed, $\mathscr M_0$ is not commutative and so we cannot use the Gauss map trick from \cite[Section 3]{NZ00} to conclude. We adopt the following new strategy. Using the induction in two steps, we may write $\Ind_P^G (N^s) = \Ind_{P_0}^G (\Ind_P^{P_0}(N^s)) \cong \rL^\infty(\overline V_0, \Ind_P^{P_0}(N^s))$. Regarding $\mathscr M_0 \subset \rL^\infty(\overline V_0, \Ind_P^{P_0}(N^s))$ as a $G$-von Neumann subalgebra, denote by $\mathscr N_0 \subset \Ind_P^{P_0}(N^s)$ the von Neumann subalgebra generated by the essential values of all the elements $f \in \mathscr A_0$, where $\mathscr A_0 \subset \mathscr M_0$ is an ultraweakly dense separable $\rC^*$-subalgebra. On the one hand, by construction, we have $\mathscr M_0 \subset \rL^\infty(\overline V_0, \mathscr N_0) \cong \rL^\infty(\overline V_0) \ovt \mathscr N_0$. On the other hand, exploiting that $s^{-1}$ acts by conjugation as a contracting automorphism of $\overline V_0$, we show that $\bfC 1 \ovt \mathscr N_0 \subset \mathscr M_0$. Therefore, we have the inclusions $\bfC 1 \ovt \mathscr N_0 \subset \mathscr M_0 \subset \rL^\infty(\overline V_0) \ovt \mathscr N_0$. By considering the centers, we also have $\bfC 1 \ovt \mathscr Z(\mathscr N_0) \subset \mathscr Z( \mathscr M_0 ) \subset \rL^\infty(\overline V_0) \ovt  \mathscr Z(\mathscr N_0)$. 
Since $G \curvearrowright \mathscr M_0$ is faithful and since $s$ acts identically on $\bfC 1 \ovt \mathscr N_0$, we have $\bfC1 \ovt \mathscr N_0 \subsetneq \mathscr M_0$. Exploiting Ge--Kadison's splitting theorem \cite{GK95}, we show that $\bfC 1 \ovt \mathscr Z(\mathscr N_0) \neq \mathscr Z( \mathscr M_0 )$. This further implies that $\Phi(\mathscr Z(\mathscr M_0)) \neq \bfC 1$. We may now apply Nevo--Zimmer's result \cite[Theorem 1]{NZ00} to the commutative $G$-von Neumann algebra $\mathscr Z(\mathscr M_0)$ to obtain a proper parabolic subgroup $P < Q < G$ and a $G$-equivariant unital normal embedding $\iota : \rL^\infty(G/Q) \hookrightarrow \mathscr Z(\mathscr M_0) \subset \widehat M$.

\textbf{Step 4: Back to the lattice.} We use the simple argument given in \cite{BBH21}. Since the action $G \curvearrowright \rC(G/Q)$ is $\|\cdot\|$-continuous, by $G$-equivariance, it follows that $\iota(\rC(G/Q))$ is contained in the $\rC^*$-subalgebra $\rC^*_b(G, M)^\Gamma \subset \widehat M$ of all $\Gamma$-equivariant bounded continuous functions $f : G \to M$. Consider the evaluation $\ast$-homomorphism $\mathsf{ev} : \rC^*_b(G, M)^\Gamma \to M : f \mapsto f(e)$. Then $\mathsf{ev} \circ \iota : \rC(G/Q) \to M$ is a $\Gamma$-equivariant $\ast$-homomorphism which extends uniquely to a $\Gamma$-equivariant unital normal embedding $\rL^\infty(G/Q) \hookrightarrow M$ thanks to \eqref{eq:uniqueness}.

\section{Proofs of the main results}\label{section:main}

In this section, following \cite{BH19, BBHP20}, we explain how to use the dynamical dichotomy Theorem \ref{thm:dynamical} to prove the main results stated in the Introduction. We fix a higher rank lattice $\Gamma < G$ and a Furstenberg measure $\mu_\Gamma \in \Prob(\Gamma)$ so that $(G/P, \nu_P)$ is the $(\Gamma, \mu_\Gamma)$-Poisson boundary (see Theorem \ref{thm:poisson-Gamma}). Since the proofs of Theorems \ref{thm:main} and \ref{thm:dynamics} are similar, we only give the proofs of Theorems \ref{thm:main}, \ref{thm:characters} and Corollary \ref{cor:C*}.

\begin{proof}[Proof of Theorem \ref{thm:main}]
Denote by $\pi : \Gamma \to \mathscr U(\mathscr H_\pi)$ the \emph{universal} unitary representation, meaning that $\pi$ is equal to the orthogonal direct sum of all cyclic unitary representations of $\Gamma$. Then $A \coloneqq \rC^*_\pi(\Gamma)$ coincides with the full $\rC^*$-algebra $\rC^*(\Gamma)$ and we may use the identification $\mathfrak S(A) = \mathscr P(\Gamma)$. Let $\mathscr C \subset \mathfrak S(A)$ be any nonempty $\Gamma$-invariant weak-$\ast$ compact convex subset. We claim that any $\mu_\Gamma$-stationary state $\varphi \in \mathscr C$ is $\Gamma$-invariant. More generally, we claim that any $\mu_\Gamma$-stationary state on $A$ is $\Gamma$-invariant. By Krein--Milman theorem, it suffices to show that any extremal $\mu_\Gamma$-stationary $\varphi \in \mathfrak S(A)$ is $\Gamma$-invariant. Letting $M = \pi_\varphi(A)\dpr$, consider the boundary structure $\Phi : M \to \rL^\infty(G/P)$ such that $\nu_P \circ \Phi = \varphi$ as in Example \ref{example:boundary-rep}. By Theorem \ref{thm:dynamical}, $\Phi$ is either invariant or singular. If $\Phi$ is invariant, then $\varphi \in \Char(\Gamma)$. If $\Phi$ is singular, then Proposition \ref{prop:dynamical-singular} implies that for every $\gamma \in \Gamma \setminus \mathscr Z(\Gamma)$, we have $\Phi(\pi(\gamma)) = 0$ and so $\varphi(\gamma) = 0$. Then $\varphi$ is supported on $\mathscr Z(\Gamma)$ and so $\varphi \in \Char(\Gamma)$.
\end{proof}

\begin{proof}[Proof of Theorem \ref{thm:characters}]
Let $\varphi \in \Char(\Gamma)$ be any character. We may assume that $\varphi$ is an extremal character. Denote by $\mathscr B$ the noncommutative Poisson boundary and  consider the boundary structure $\Phi : \mathscr B \to \rL^\infty(G/P)$ as in Example \ref{example:boundary-characters}. By Theorem \ref{thm:dynamical}, $\Phi$ is either invariant or singular.  If $\Phi$ is invariant, then for every $f \in \mathscr B$, the function $\Phi(f) : G/P \to \bfC : b \mapsto \langle f(b)\xi, \xi\rangle$ is (essentially) constant. Since  $\bfC1 \otimes \pi_\varphi(\Gamma)\dpr \subset \mathscr B$ and since the linear span of $\pi_\varphi(\Gamma)\xi_\varphi$ is dense in $ \mathscr H_\varphi$, it easily follows that every $f \in \mathscr B$ is (essentially) constant as a function $G/P \to \mathrm B(\mathscr H_\varphi)$. This further implies that $\mathscr B = \bfC 1 \otimes \pi_\varphi(\Gamma)\dpr$ and so $\pi_\varphi(\Gamma)\dpr$ is amenable. This further implies that $\pi_\varphi$ is amenable. If $\Phi$ is singular, then  for every $\gamma \in \Gamma \setminus \mathscr Z(\Gamma)$ we have $\Phi(\pi(\gamma)) = 0$ and so $\varphi (\gamma) = 0$. Thus, $\varphi$ is supported on $\mathscr Z(\Gamma)$.

If $G$ has property (T), then $\Gamma$ has property (T). If $\varphi$ is not supported on $\mathscr Z(\Gamma)$, then $\pi_\varphi$ is amenable and so $\pi_\varphi$ necessarily contains a finite dimensional subrepresentation. In the more general case where $G$ has a simple factor with property (T), we refer to the proof of \cite[Proposition 7.5]{BBHP20}.
\end{proof}

\begin{proof}[Proof of Corollary \ref{cor:C*}]
Let $\pi : \Gamma \to \mathscr U(\mathscr H_\pi)$ be any unitary representation and set $A \coloneqq \rC^*_\pi(\Gamma)$. Regarding $\mathfrak S(A) \subset \mathscr P(\Gamma)$ as a $\Gamma$-invariant weak-$\ast$ compact convex subset, Theorem \ref{thm:main} implies that the $\rC^*$-algebra $A$ admits a trace. 

Assume moreover that $G$ has trivial center and that $\pi$ is not amenable. Let $\varphi \in \mathfrak S(A)$ be any trace. Regarding $\varphi \in \Char(\Gamma)$ as a character, the GNS representation $\pi_\varphi$ is weakly contained in $\pi$ and so $\pi_\varphi$ is not amenable. Theorem \ref{thm:characters} implies that $\varphi = \delta_e$. Then $\pi_\varphi = \lambda$ is the left regular representation and $\lambda$ is weakly contained in $\pi$. Denote by $\Theta : \rC^*_\pi(\Gamma) \to \rC^*_\lambda(\Gamma)$ the unique $\ast$-homomorphism such that $\Theta(\pi(\gamma)) = \lambda(\gamma)$ for every $\gamma \in \Gamma$. Then $\tau_\Gamma \circ \Theta$ is the unique trace on $A = \rC^*_\pi(\Gamma)$. Let $J \lhd A$ be any proper ideal and define $\rho : A \to A/J$. Then the unitary representation $\rho \circ \pi$ is weakly contained in $\pi$ and so $\rho \circ \pi$ is not amenable. The previous reasoning implies that $\lambda$ is weakly contained in $\rho \circ \pi$ and so there is a $\ast$-homomorphism $\overline \Theta : A/J \to \rC^*_\lambda(\Gamma)$ such that $\Theta = \overline \Theta \circ \rho$. Then $J = \ker(\rho) \subset \ker(\Theta)$. Therefore, $\ker(\Theta) \lhd \rC^*_\pi(\Gamma)$ is the unique maximal proper ideal. 
\end{proof}

We now discuss open problems in relation with our main results. Let $(E, \|\cdot\|)$ be any separable Banach $\Gamma$-module and $\mathscr C \subset E^*$ any nonempty $\Gamma$-invariant weak-$\ast$ compact convex subset. Let $\mu \in \Prob(\Gamma)$ be any probability measure. By analogy with \cite{Fu98}, we say that the affine action $\Gamma \curvearrowright \mathscr C$ is $\mu$-\emph{stiff} if any $\mu$-stationary point is invariant. The proof of Theorem \ref{thm:main} shows that for any Furstenberg measure $\mu_\Gamma \in \Prob(\Gamma)$, the affine action $\Gamma \curvearrowright \mathscr P(\Gamma)$ is $\mu_\Gamma$-stiff. It is natural to ask whether the stiffness property holds for more general probability measures $\mu \in \Prob(\Gamma)$.

\begin{problem}\label{prob:stationary}
Let $\mu \in \Prob(\Gamma)$ be \emph{any} probability measure such that $\langle \supp(\mu)\rangle = \Gamma$. Is the action $\Gamma \curvearrowright \mathscr P(\Gamma)$ $\mu$-stiff? 
\end{problem}

Problem \ref{prob:stationary} requires a new strategy as we can no longer identify the $(\Gamma, \mu)$-Poisson boundary with $(G/P, \nu_P)$. We note that in case $\Gamma$ has trivial center, it is showed in \cite{HK17} that for any probability measure $\mu \in \Prob(\Gamma)$ such that $\langle \supp(\mu)\rangle = \Gamma$, the affine action $\Gamma \curvearrowright \mathfrak S(\rC^*_\lambda(\Gamma))$ is $\mu$-stiff and the canonical trace $\tau_\Gamma$ is the only $\mu$-stationary state on $\rC^*_\lambda(\Gamma)$. Problem \ref{prob:stationary} is particularly relevant in case $\Gamma \coloneqq \SL_d(\bfZ) < \SL_d(\bfR) \coloneqq G$ for $d \geq 3$. To draw a parallel with homogeneous dynamics, we point out that the affine action $\SL_d(\bfZ) \curvearrowright \Prob(\bfT^d)$ is $\mu$-stiff for every $d \geq 2$ and every probability measure $\mu \in \Prob(\SL_d(\bfZ))$ such that $\langle \supp(\mu)\rangle = \SL_d(\bfZ)$ \cite{BFLM09}. We refer the reader to \cite{BFLM09} and \cite{BQ09} for more general results regarding measure rigidity.

We say that a countable discrete group $\Lambda$ is \emph{character rigid} if for any extremal character $\varphi \in \Char(\Lambda)$, either $\varphi$ is supported on $\mathscr Z(\Lambda)$ or the GNS representation $\pi_\varphi$ is finite dimensional. Assuming that $G$ has a simple factor with property (T), Theorem \ref{thm:characters} says that $\Gamma$ is character rigid. It is showed in \cite{PT13} that $\SL_d(\bfZ[S^{-1}])$ is character rigid for every $d \geq 2$ and every nonempty set of primes $S$. In view of Margulis'\! normal subgroup theorem which holds for arbitrary higher rank lattices, the next problem is of fundamental importance.

\begin{problem}\label{prob:NCSZ}
Let $\Gamma < G$ be \emph{any} higher rank lattice. Is $\Gamma$ character rigid?
\end{problem}

Problem \ref{prob:NCSZ} is also discussed in \cite[Question 2.1]{Ge18} for characters arising from ergodic probability measure preserving actions $\Gamma \curvearrowright (X, \nu)$, in connection with Stuck--Zimmer's results \cite{SZ92}. To answer positively Problem \ref{prob:NCSZ}, it would suffice to prove for every extremal character $\varphi \in \Char(\Gamma)$ that is not supported on $\mathscr Z(\Gamma)$, the tracial GNS factor $\pi_\varphi(\Gamma)\dpr$ has property (T) in the sense of Connes--Jones \cite{CJ83}. This would correspond to the \emph{property} (T) \emph{half} in Margulis'\! normal subgroup theorem.

\section{Noncommutative factor theorem and  Connes'\ rigidity conjecture}\label{section:NCFT}

Connes \cite{Co80} obtained the first rigidity result in von Neumann algebras by showing that for any icc (infinite conjugacy classes) countable discrete group $\Lambda$ with property (T), the type $\mathrm{II}_1$ factor $M = \rL(\Lambda)$ has countable outer automorphism group $\Out(M)$ and countable fundamental group $\mathscr F(M)$\footnote{The \emph{fundamental group} $\mathscr F(M)$ of a type $\mathrm{II}_1$ factor $M$ is defined as the subgroup of $\bfR^*_+$ that consists of all $\frac{\tau(p)}{\tau(q)}$, where $p, q \in M$ are projections for which $pMp \cong q M q$.}. This result prompted Connes to state the following bold conjecture (see \cite[Problem V.B.1]{Co94}).

\begin{connes}
Let $\Lambda_1$ and $\Lambda_2$ be any icc countable discrete groups with property (T) such that $\rL(\Lambda_1) \cong \rL(\Lambda_2)$. Show that $\Lambda_1 \cong \Lambda_2$.
\end{connes}

We say that a discrete group $\Lambda$ is W$^*$-\emph{superrigid} if whenever $\Upsilon$ is another discrete group such that $\rL(\Lambda) \cong \rL(\Upsilon)$, we have $\Lambda \cong \Upsilon$. Using \cite{CJ83}, Connes'\! rigidity conjecture asks whether every icc countable discrete group $\Lambda$ with property (T) is W$^*$-superrigid. 

A first deep result towards Connes'\! rigidity conjecture was obtained by Cowling--Haagerup \cite{CH88} where they showed that for every $n \geq 2$ and every lattice $\Lambda$ in the rank one connected simple Lie group $\Sp(n, 1)/\{\pm 1\}$, the type $\mathrm {II}_1$ factor $\rL(\Lambda)$ retains the integer $n \geq 2$. For the last two decades,  Popa's \emph{deformation/rigidity} theory \cite{Po06} has led to tremendous progress regarding classification and rigidity results for group (resp.\ group measure space) von Neumann algebras. In particular, the first examples of W$^*$-superrigid groups were obtained by Ioana--Popa--Vaes \cite{IPV10}. The examples constructed in \cite{IPV10} are generalized wreath products groups and so they do not have property (T). It is still an open problem to find an example of a W$^*$-superrigid countable discrete group $\Lambda$ with property (T). For other recent results regarding classification and rigidity results for von Neumann algebras, we refer the reader to the surveys \cite{Va10, Io12, Io18}.

Connes'\! rigidity conjecture is particularly relevant for the class of higher rank lattices. In this context, celebrated strong rigidity results by Mostow and Margulis (see \cite{Mo73} and \cite[Chapter VI]{Ma91}) show that whenever $\Gamma_i < G_i$ is a higher rank lattice, where $G_i$ has trivial center, $i = 1, 2$, if $\Gamma_1 \cong \Gamma_2$, then $G_1 \cong G_2$. In view of the strong rigidity results by Mostow and Margulis, we state the following version of Connes'\! rigidity conjecture for higher rank lattices.

\begin{conjecture}
For every  $i = 1, 2$, let $G_i$ be any connected simple real Lie group with trivial center and real rank $\rk_{\bfR}(G_i) \geq 2$ and $\Gamma_i < G_i$ any lattice. If $\rL(\Gamma_1) \cong \rL(\Gamma_2)$, then $G_1 \cong G_2$.
\end{conjecture}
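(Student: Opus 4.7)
The natural approach is to treat this conjecture as a $\rW^*$-superrigidity problem and then invoke Mostow--Margulis strong rigidity to extract $G_i$. The plan is, starting from an isomorphism $\theta : \rL(\Gamma_1) \cong \rL(\Gamma_2)$, to first canonically reconstruct the boundary crossed products $\rL(\Gamma_i \curvearrowright G_i/P_i)$ from the abstract factors $\rL(\Gamma_i)$ and transport $\theta$ to an isomorphism between these larger factors sending $\rL(\Gamma_1)$ to $\rL(\Gamma_2)$, then to use the noncommutative factor theorem (Corollary \ref{thm:NCFT}) to extract the combinatorial structure of $G_i$ from the intermediate subfactor lattice of that larger inclusion, and finally to recognize $G_i$ from this combinatorial data.

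For the reconstruction step, I would try to characterize the inclusion $\rL(\Gamma) \subset \rL(\Gamma \curvearrowright G/P)$ purely in von Neumann algebraic terms, presumably as a universal ``noncommutative Poisson boundary extension'' of $\rL(\Gamma)$. A plausible candidate is an equivariant Hamana-type injective envelope, producing a maximal amenable extension $\rL(\Gamma) \subset \widetilde M$ equipped with a distinguished stationary faithful normal conditional expectation. The dynamical dichotomy Theorem \ref{thm:dynamical} together with the fixed-point Theorem \ref{thm:main} should enforce uniqueness, since they jointly force any ``extraneous'' stationary substructure inside such a candidate either to be invariant (hence trivialized by ergodicity) or to be singular (hence inert under all admissible ucp maps between the two purported reconstructions).

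Granting the reconstruction, Corollary \ref{thm:NCFT} identifies the poset of intermediate subfactors $\rL(\Gamma_i) \subset M \subset \rL(\Gamma_i \curvearrowright G_i/P_i)$ with the poset of intermediate parabolic subgroups $P_i < Q < G_i$, which is combinatorially equivalent to the face lattice of the spherical building of $G_i$ and therefore determines its relative Dynkin diagram. Since a connected simple real Lie group with trivial center and real rank $\geq 2$ is determined by its relative Dynkin diagram up to finitely many variants, any remaining ambiguity should be resolvable by comparing auxiliary invariants of the associated factors $\rL(\Gamma_i \curvearrowright G_i/Q)$, for instance their Murray--von Neumann type or their Connes $\rT$-invariant.

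The hard part---and the reason the statement is only a conjecture---is unambiguously the reconstruction step. No current technique, neither Popa's deformation/rigidity framework nor the character rigidity of Theorem \ref{thm:characters}, produces the boundary crossed product $\rL(\Gamma \curvearrowright G/P)$ from the bare tracial factor $\rL(\Gamma)$. One concrete manifestation of this difficulty is that $\rL(\Gamma)$ is of type $\mathrm{II}_1$ while $\rL(\Gamma \curvearrowright G/P)$ is generically of type $\mathrm{III}$, so the construction must conjure a nontrivial modular flow from a purely tracial starting point using only intrinsic von Neumann algebraic data. Making this step rigorous would yield the first example of $\rW^*$-superrigidity for a group with property $(\rT)$, and is presumably as hard as Connes' rigidity conjecture itself.
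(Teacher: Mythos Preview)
This statement is a \emph{conjecture} in the paper, and the paper does \emph{not} prove it. What the paper does is suggest, immediately after stating the conjecture, exactly the kind of strategy you outline: it proves Corollary~\ref{thm:NCFT}, observes that the lattice of intermediate subfactors of $\rL(\Gamma) \subset \rL(\Gamma \curvearrowright G/P)$ has cardinality $2^{\rk_{\bfR}(G)}$ and hence that this inclusion retains the real rank, and then writes ``We believe this observation could be useful to tackle Connes' rigidity conjecture and to show that the group von Neumann algebra $\rL(\Gamma)$ retains the real rank $\rk_{\bfR}(G)$.'' So your proposal is not a proof but a fleshed-out version of the paper's own heuristic, and you correctly identify the reconstruction of $\rL(\Gamma \curvearrowright G/P)$ from the bare factor $\rL(\Gamma)$ as the genuinely open step.

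One correction to your outline, even at the heuristic level: the poset of intermediate parabolics $P < Q < G$ is just the Boolean lattice on the set of simple restricted roots, so as an abstract poset it encodes only $\rk_{\bfR}(G)$, not the relative Dynkin diagram or anything resembling the face lattice of the building. This is precisely the paper's claim---the inclusion retains the rank---and no more. To separate, say, $\PSL_{n+1}(\bfR)$ from $\mathrm{PSp}_{2n}(\bfR)$ (both of real rank $n$) one would indeed need the auxiliary invariants of the individual $\rL(\Gamma \curvearrowright G/Q)$ that you allude to, but no such argument is currently available either. In short, your diagnosis of where the difficulty lies is accurate and aligned with the paper's own discussion, but neither you nor the paper offers a proof.
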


Popa's \emph{deformation/rigidity} theory cannot be used to tackle the above conjecture because higher rank lattices are somehow ``too rigid''. As suggested by Connes himself (see the discussion in \cite[Section 4]{Jo00}), it is natural to try and develop a strategy building upon the works of Furstenberg, Margulis and Zimmer. In what follows, we assume that $G$ is a connected simple real Lie group with trivial center and real rank $\rk_{\bfR}(G) \geq 2$. We fix a minimal parabolic subgroup $P < G$. Let $\Gamma < G$ be any lattice (e.g.\ $\Gamma \coloneqq \PSL_d(\bfZ) < \PSL_d(\bfR) \coloneqq G$ for $d \geq 3$). Then $\Gamma$ is icc and the group von Neumann algebra $\rL(\Gamma)$ is a type $\mathrm{II}_1$ factor. Moreover, the nonsingular action $\Gamma \curvearrowright  (G/P, \nu_P)$ is (essentially) free and ergodic and the corresponding von Neumann factor $\rL(\Gamma \curvearrowright G/P)$ is amenable and of type $\mathrm{III}_1$ (see e.g.\ \cite[Proposition 4.7]{BN11}). We now give the proof of Corollary \ref{thm:NCFT} by combining Theorem \ref{thm:NCNZ} with Suzuki's results \cite{Su18}.

\begin{proof}[Proof of Corollary \ref{thm:NCFT}]
Denote by $\rE : \rL(\Gamma \curvearrowright G/P) \to \rL^\infty(G/P)$ the canonical $\Gamma$-equivariant conditional expectation. Let  $\rL(\Gamma) \subset M \subset \rL(\Gamma \curvearrowright G/P)$ be any intermediate von Neumann subalgebra and consider the boundary structure $\Phi = \rE|_M : M \to \rL^\infty(G/P)$. By Theorem \ref{thm:NCNZ}, there are two cases to consider.

Firstly, assume that $\Phi$ is invariant. Following Examples \ref{ex:vN}(2), we simply denote by $u_\gamma \in \rL(\Gamma) \subset \rL(\Gamma \curvearrowright G/P)$ the canonical unitaries implementing the action $\Gamma \curvearrowright G/P$. For every $x \in M$, write $x = \sum_{\gamma \in \Gamma} x_\gamma  u_\gamma$ for its Fourier expansion, where $x_\gamma = \rE(x u_\gamma^*)$ for every $\gamma \in \Gamma$. Since $\rE|_M = \Phi$ is invariant and since $\rL(\Gamma) \subset M$, it follows that $x_\gamma \in \bfC 1$ for every $\gamma \in \Gamma$ and every $x \in M$. This implies that $M = \rL(\Gamma)$ (see e.g.\ \cite[Lemma 6.8]{AHHM18}).

Secondly, assume that $\Phi$ is not invariant. Then there exist a proper parabolic subgroup $P < Q < G$  and a $\Gamma$-equivariant unital normal embedding $\iota : \rL^\infty(G/Q) \hookrightarrow M$ such that $\rE \circ \iota = p_Q^\ast$. This further implies that $\rL(\Gamma \curvearrowright G/Q) = \rL(\Gamma) \vee \rL^\infty(G/Q) \subset M$. Since the nonsingular action $\Gamma \curvearrowright (G/Q, \nu_Q)$ is (essentially) free (see e.g.\ \cite[Lemma 6.2]{BH19}), a combination of \cite[Theorem 3.6]{Su18} and \cite[Theorem IV.2.11]{Ma91} implies that there exists a parabolic subgroup $P < R < Q$ such that $M = \rL(\Gamma \curvearrowright G/R)$.
 \end{proof}

It is well known that there are exactly $2^{\rk_{\bfR}(G)}$ intermediate parabolic subgroups $P <  Q < G$. Thus, Corollary \ref{thm:NCFT} implies that there are exactly $2^{\rk_{\bfR}(G)}$ intermediate von Neumann subalgebras $\rL(\Gamma) \subset M \subset \rL(\Gamma \curvearrowright G/P)$. In particular, the inclusion $\rL(\Gamma) \subset \rL(\Gamma \curvearrowright G/P)$ retains the real rank $\rk_{\bfR}(G)$. We believe this observation could be useful to tackle Connes' rigidity conjecture and to show that the group von Neumann algebra $\rL(\Gamma)$ retains the real rank $\rk_{\bfR}(G)$.


\end{document}